\newcolumntype{C}[1]{>{\centering\arraybackslash}p{#1}}
\newcolumntype{L}[1]{>{\arraybackslash}p{#1}}
\newcommand{\val}[1]{[\![{#1}]\!]}
\newcommand{\descr}[1]{(\![{#1}]\!)}
\newcommand{\marginnote}[1]{\marginpar{\raggedright\tiny{#1}}} 
\newcommand{\nomi}{\mathbf{i}}
\newcommand{\nomj}{\mathbf{j}}
\newcommand{\cnomm}{\mathbf{m}}
\newcommand{\cnomn}{\mathbf{n}}
\newcommand{\Prop}{\mathsf{Prop}}
\newcommand{\X}{\mathbb{X}}
\newcommand{\z}{z}
\begin{document}
\title{Modelling informational entropy \thanks{The research of the third author is supported by the NWO Vidi grant 016.138.314, the NWO Aspasia grant 015.008.054, and a Delft
Technology Fellowship awarded in 2013.}}

%
%
\author{Willem Conradie\inst{1} \and
Andrew Craig\inst{2} \and
Alessandra Palmigiano\inst{2,3} \and
Nachoem M. Wijnberg\inst{2,4}}

\authorrunning{Conradie Craig Palmigiano Wijnberg}

\institute{University of the Witwatersrand, South Africa \and
University of Johannesburg, South Africa \and 
Delft University of Technology, The Netherlands \and 
University of Amsterdam, The Netherlands}

%
%

%
\maketitle              

\begin{abstract}
By `informational entropy', we understand an inherent boundary to knowability, due e.g.~to perceptual, theoretical, evidential or linguistic limits. In this paper, we discuss a logical framework in which this boundary is incorporated into the semantic and deductive machinery, and outline how this framework can be used to model various situations in which informational entropy arises. 

\keywords{Lattice-based modal logic \and Epistemic logic \and Concept lattice \and Graph-based semantics \and Polarity-based semantics}
\end{abstract}
\section{Introduction}
This paper contributes to a line of research stemming from the theory of canonicity and correspondence of lattice expansions \cite{gehrke2001bounded,conradie2016constructive,CoPa:non-dist,CCPZ}, which aims at 
defining and studying relational semantic frameworks for lattice-based logics. The present contribution specifically builds on the  {\em graph-based} semantics introduced in  \cite{conradie2015relational}, on the basis of a `modal expansion' of Plo\v{s}\v{c}ica's representation \cite{ploscica1994}, its relationship with canonical extensions of bounded lattices \cite{craig2013fresh,craig2015tirs}, and the ensuing algebraic canonicity and correspondence results \cite{conradie2015relational,CoPa:non-dist}. The resulting relational structures introduced in this paper, called {\em graph-based frames} (cf.~Definition \ref{def:graph:based:frame:and:model}), are more general than those in   \cite{conradie2015relational}, as the `TiRS' conditions have been removed. Hence, rather than being characterized as  discrete duals of perfect modal lattices, the graph-based structures considered here are in a discrete adjunction with complete modal lattices, much in the same way in which  the class of the relational structures interpreting the same logic in \cite{conradie2016categories}, which are based on polarities rather than on graphs, was generalized in \cite{Tarkpaper} so as to remove the `RS' conditions. However, the notions of satisfaction and refutation of formulas at states of graph-based frames can be extracted from their interpretation on the complex algebras of graph-based frames by an analogous `dual characterization' process which the frames-to-algebras direction of the adjunction is enough to convey.

Besides this technical contribution,  there is also a conceptual contribution which consists of making sense of this semantic framework in a more fundamental way. Our proposal in this respect is to use graph-based frames to provide a purely qualitative representation 
of the notion of   {\em relative entropy} in information theory \cite{Weaver-Shannon}, 
which is a stochastic measure of  {\em noise} in communication systems.  
As is argued by Weaver~\cite{Weaver-Shannon}, the significance of the key notions and insights developed in information theory goes very much beyond the original ``engineering aspects of communication'', and invests also such aspects as meaning and knowledge. If the notion of relative entropy is construed more broadly in this way, so as to
 capture  {\em conceptual noise}, then it can be understood as the inherent boundary to knowability due e.g.~to perceptual, theoretical, evidential or linguistic limits. In this paper, as specific examples, we model phenomena of informational entropy (under this broader understanding) arising in natural language and visual perception.  The interpretation proposed in the present paper  is further pursued in  \cite{graph-based-MV}, where informational entropy arises from the scientific theories on which empirical studies are grounded, and in \cite{tark1}, where it arises from socio-political theories.
 
Of course, the interpretation and use of graph-based structures proposed in the present paper  does not exclude the possibility of other interpretations and uses, as is suggested by the fact that the `companion' polarity-based semantics for lattice-based modal logic has been used to provide different interpretations of the lattice-based modal logic, including one in which lattice-based modal logic is viewed as an {\em epistemic logic of categories} \cite{conradie2016categories,Tarkpaper} and one \cite{roughconcepts,ICLA2019paper} in which the same logic is viewed as the {\em logic of rough concepts}, where polarity-based semantics  is used as an encompassing framework for the integration of rough set theory \cite{pawlak} and formal concept analysis \cite{ganter2012formal}, and as a basis for further developments such as a 
Dempster--Shafer theory of concepts \cite{TarkDS}.

\section{Preliminaries}
\paragraph{Notation.} 
We let $\Delta_U$ denote the identity relation on a set $U$, and we will drop the subscript when it causes no ambiguity. The superscript $(\cdot)^c$ denotes the relative complement of the subset of a given set. Hence, for any binary relation $R\subseteq S\times T$, we let $R^c\subseteq S\times T$ be defined by  $(s, t)\in R^c$ iff $(s, t)\notin R$.  For any such $R$ and any $S'\subseteq S$ and $T'\subseteq T$, we  let $R[S']: = \{t\in T\mid (s, t)\in R \mbox{ for some } s\in S'\}$ and $R^{-1}[T']: = \{s\in S\mid (s, t)\in R \mbox{ for some } t\in T'\}$, and write $R[s]$ and $R^{-1}[t]$ for $R[\{s\}]$ and $R^{-1}[\{t\}]$, respectively. Any such $R$ gives rise to the
  {\em semantic modal operators} $\langle R\rangle, [R]: \mathcal{P}(T)\to \mathcal{P}(S)$ s.t.~
  $\langle R\rangle W : = R^{-1}[W] $ and $ [R]W: = (R^{-1}[W^c])^c$ for any $W\subseteq T$.
For any  $T\subseteq U\times V$, and any $U'\subseteq U$  and $V'\subseteq V$, let
\begin{equation}\label{eq:def;round brackets} T^{(1)}[U']:=\{v\mid \forall u(u\in U'\Rightarrow uTv) \}  \quad\quad T^{(0)}[V']:=\{u\mid \forall v(v\in V'\Rightarrow uTv) \}.\end{equation}
Known properties of this construction (cf.~\cite[Sections 7.22-7.29]{davey2002introduction}) are collected below.
 \begin{lemma}\label{lemma: basic}
\begin{enumerate}
\item $X_1\subseteq X_2\subseteq U$ implies $T^{(1)}[X_2]\subseteq T^{(1)}[X_1]$, and $Y_1\subseteq Y_2\subseteq V$ implies $T^{(0)}[Y_2]\subseteq T^{(0)}[Y_1]$.
\item $U'\subseteq T^{(0)}[V']$ iff  $V'\subseteq T^{(1)}[U']$.
 \item $U'\subseteq T^{(0)}[T^{(1)}[U']]$ and $V'\subseteq T^{(1)}[T^{(0)}[V']]$.
 \item $T^{(1)}[U'] = T^{(1)}[T^{(0)}[T^{(1)}[U']]]$ and $T^{(0)}[V'] = T^{(0)}[T^{(1)}[T^{(0)}[V']]]$.
 \item $T^{(0)}[\bigcup\mathcal{V}] = \bigcap_{V'\in \mathcal{V}}T^{(0)}[V']$ and $T^{(1)}[\bigcup\mathcal{U}] = \bigcap_{U'\in \mathcal{U}}T^{(1)}[U']$.
\end{enumerate}
 \end{lemma}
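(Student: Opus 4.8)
The plan is to first isolate the adjunction property, item~(2), since it unlocks (1), (3) and (4) by purely order-theoretic reasoning, while (5) is a direct computation. Unfolding the definitions of $T^{(0)}[\cdot]$ and $T^{(1)}[\cdot]$ in \eqref{eq:def;round brackets}, the inclusion $U'\subseteq T^{(0)}[V']$ asserts exactly that $uTv$ holds for every $u\in U'$ and every $v\in V'$, and the inclusion $V'\subseteq T^{(1)}[U']$ asserts precisely the same thing; hence the two are equivalent, which proves~(2). In other words, $(T^{(1)}[\cdot], T^{(0)}[\cdot])$ is an antitone Galois connection between $\mathcal{P}(U)$ and $\mathcal{P}(V)$.

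Next I would prove the antitonicity statements in~(1) directly from the definitions: if $X_1\subseteq X_2\subseteq U$ and $v\in T^{(1)}[X_2]$, then $uTv$ for all $u\in X_2$, so in particular for all $u\in X_1$, whence $v\in T^{(1)}[X_1]$; the argument for $T^{(0)}[\cdot]$ is symmetric. For~(3), I would apply~(2) with $V'$ replaced by $T^{(1)}[U']$: the inclusion $T^{(1)}[U']\subseteq T^{(1)}[U']$ is trivially true, so by~(2) we get $U'\subseteq T^{(0)}[T^{(1)}[U']]$; the other half is obtained symmetrically (apply~(2) with $U'$ replaced by $T^{(0)}[V']$).

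For~(4) I would combine~(1) and~(3). Applying the antitone operation $T^{(1)}[\cdot]$ to the inclusion $U'\subseteq T^{(0)}[T^{(1)}[U']]$ from~(3) yields $T^{(1)}[T^{(0)}[T^{(1)}[U']]]\subseteq T^{(1)}[U']$; conversely, instantiating the second inclusion of~(3), namely $V'\subseteq T^{(1)}[T^{(0)}[V']]$, at $V':=T^{(1)}[U']$ yields $T^{(1)}[U']\subseteq T^{(1)}[T^{(0)}[T^{(1)}[U']]]$. Together these two inclusions give the first identity of~(4), and the second follows symmetrically. Finally, for~(5) I would argue directly by manipulating quantifiers: $u\in T^{(0)}[\bigcup\mathcal{V}]$ iff $uTv$ for every $v$ belonging to some member of $\mathcal{V}$, iff for every $V'\in\mathcal{V}$ and every $v\in V'$ we have $uTv$, iff $u\in T^{(0)}[V']$ for every $V'\in\mathcal{V}$, iff $u\in\bigcap_{V'\in\mathcal{V}}T^{(0)}[V']$; the identity for $T^{(1)}[\cdot]$ is symmetric.

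I do not expect a genuine obstacle: the only point requiring care is the bookkeeping of quantifiers, including the vacuous cases $U'=\emptyset$, $V'=\emptyset$, or $\mathcal{V}=\emptyset$, all of which are handled automatically by the universally quantified ``$\forall u(u\in U'\Rightarrow uTv)$'' form of the definitions. The single load-bearing step is~(2); once it is in place, items (1), (3) and (4) are formal consequences of working with an antitone Galois connection, and (5) is an unconditional identity about how the maps interact with unions.
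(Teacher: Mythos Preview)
Your argument is correct and is the standard verification that $(T^{(1)}[\cdot],T^{(0)}[\cdot])$ form an antitone Galois connection, from which items (1)--(4) follow formally and (5) by direct quantifier manipulation. The paper does not actually prove this lemma: it simply states that these are known properties and cites \cite[Sections 7.22--7.29]{davey2002introduction}, so there is no proof to compare against beyond noting that your write-up is exactly the kind of routine verification the cited reference contains.
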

For any relation $T\subseteq U\times V$, and any $U'\subseteq U$  and $V'\subseteq V$, let
\begin{equation}\label{eq:def:square brackets}T^{[1]}[U']:=\{v\mid \forall u(u\in U'\Rightarrow uT^cv) \}  \quad\quad T^{[0]}[V']:=\{u\mid \forall v(v\in V'\Rightarrow uT^cv) \}.\end{equation}
Hence, $T^{[1]}[U'] = (T^c)^{(1)}[U']$ and $T^{[0]}[V'] = (T^c)^{(0)}[V']$, therefore, the following lemma is an immediate consequence of 
 Lemma \ref{lemma: basic} instantiated to $T: = T^c$.
 \begin{lemma}\label{lemma: basic:square:brackets}
\begin{enumerate}
\item $X_1\subseteq X_2\subseteq U$ implies $T^{[1]}[X_2]\subseteq T^{[1]}[X_1]$, and $Y_1\subseteq Y_2\subseteq V$ implies $T^{[0]}[Y_2]\subseteq T^{[0]}[Y_1]$.
\item $U'\subseteq T^{[0]}[V']$ iff  $V'\subseteq T^{[1]}[U']$.
 \item $U'\subseteq T^{[0]}[T^{[1]}[U']]$ and $V'\subseteq T^{[1]}[T^{[0]}[V']]$.
 \item $T^{[1]}[U'] = T^{[1]}[T^{[0]}[T^{[1]}[U']]]$ and $T^{[0]}[V'] = T^{[0]}[T^{[1]}[T^{[0]}[V']]]$.
 \item $T^{[0]}[\bigcup\mathcal{V}] = \bigcap_{V'\in \mathcal{V}}T^{[0]}[V']$ and $T^{[1]}[\bigcup\mathcal{U}] = \bigcap_{U'\in \mathcal{U}}T^{[1]}[U']$.
\end{enumerate}
 \end{lemma}
\subsection{Basic normal non-distributive modal logic}
\label{sec:logics}
The logic discussed below was considered in \cite{conradie2016categories} as an instance of a logic to which a general methodology applies  for endowing lattice-based logics with relational semantics (cf.~\cite[Section 2]{CoPa:non-dist}). The semantics of this logic was  based on a restricted class of formal contexts. These restrictions were lifted in \cite{Tarkpaper}. 
\paragraph{Basic logic.} Let $\Prop$ be a (countable or finite) set of atomic propositions. The language $\mathcal{L}$ of the {\em basic normal non-distributive modal logic} is defined as follows:
\[ \varphi := \bot \mid \top \mid p \mid  \varphi \wedge \varphi \mid \varphi \vee \varphi \mid \Box \varphi \mid  \Diamond\varphi,\] 
where $p\in \Prop$. 
The {\em basic}, or {\em minimal normal} $\mathcal{L}$-{\em logic} is a set $\mathbf{L}$ of sequents $\phi\vdash\psi$  with $\phi,\psi\in\mathcal{L}$, containing the following axioms:
		{\small{
			\begin{align*}
				&p\vdash p, && \bot\vdash p, && p\vdash \top, & &  &\\
				&p\vdash p\vee q, && q\vdash p\vee q, && p\wedge q\vdash p, && p\wedge q\vdash q, &\\
				& \top\vdash \Box \top, && \Box p\wedge \Box q \vdash \Box ( p\wedge q),
           && \Diamond \bot\vdash \bot, &&
                \Diamond p \vee \Diamond  q \vdash \Diamond ( p \vee q) &
			\end{align*}
			}}
	%
	%
		and closed under the following inference rules:
		{\small{
		\begin{displaymath}
			\frac{\phi\vdash \chi\quad \chi\vdash \psi}{\phi\vdash \psi}
			\quad
			\frac{\phi\vdash \psi}{\phi\left(\chi/p\right)\vdash\psi\left(\chi/p\right)}
			\quad
			\frac{\chi\vdash\phi\quad \chi\vdash\psi}{\chi\vdash \phi\wedge\psi}
			\quad
			\frac{\phi\vdash\chi\quad \psi\vdash\chi}{\phi\vee\psi\vdash\chi}
			\quad
			\frac{\phi\vdash\psi}{\Box \phi\vdash \Box \psi}
\quad
\frac{\phi\vdash\psi}{\Diamond \phi\vdash \Diamond \psi}
		\end{displaymath}
		}}
By an {\em $\mathcal{L}$-logic} we understand any  extension of $\mathbf{L}$  with $\mathcal{L}$-axioms $\phi\vdash\psi$.

\paragraph{Algebraic semantics.} 
The logic above is sound and complete w.r.t.~the class $\mathbb{LE}$ of normal lattice expansions $\mathbb{A} = (\mathbb{L}, \Box, \Diamond)$, where $\mathbb{L} =(L, \wedge, \vee, \top, \bot)$ is a general lattice, and  $\Box$ and $\Diamond$ are unary operations on $\mathbb{L}$ satisfying the following identities:
\[\Box \top = \top \quad \Box(a\wedge b) = \Box a\wedge \Box b\quad \Diamond\bot = \bot\quad \Diamond(a\vee b) = \Diamond a\vee\Diamond b.\]
In what follows, we will sometimes refer to elements of $\mathbb{LE}$ as $\mathcal{L}$-algebras.
Since  $\mathbf{L}$ is selfextensional (i.e.\ the interderivability relation is a congruence of the formula algebra),  a standard 
Lindenbaum--Tarski construction is sufficient to show its completeness w.r.t.~$\mathbb{LE}$, i.e.~that an $\mathcal{L}$-sequent $\phi\vdash\psi$ is in $\mathbf{L}$ iff $\mathbb{LE}\models\phi\vdash\psi$.

\section{Graph-based semantics for the basic non-distributive modal logic}
\label{sec:graph-based L-frames and models}

Graph-based models for non-distributive logics arise in close connection with the topological structures dual to general lattices in Plo\v{s}\v{c}ica's representation \cite{ploscica1994}, see also \cite{craig2013fresh,craig2015tirs}. However, an important difference in the current paper is that we do not require the TiRS conditions \cite[Section 2]{craig2015tirs}.  

A {\em reflexive graph} is a structure $\X = (Z, E)$ such that $Z$ is a nonempty set, and $E\subseteq Z\times Z$ is a reflexive relation. From now on, we will assume that all graphs we consider are reflexive even when we drop the adjective.  
Any graph $\X = (Z, E)$  defines the polarity\footnote{ A {\em formal context} \cite{ganter2012formal}, or {\em polarity},  is a structure $\mathbb{P} = (A, X, I)$ such that $A$ and $X$ are sets, and $I\subseteq A\times X$ is a binary relation. Every such $\mathbb{P}$ induces maps $(\cdot)^\uparrow: \mathcal{P}(A)\to \mathcal{P}(X)$ and $(\cdot)^\downarrow: \mathcal{P}(X)\to \mathcal{P}(A)$, respectively defined by the assignments $B^\uparrow: = I^{(1)}[B]$ and $Y^\downarrow: = I^{(0)}[Y]$. A {\em formal concept} of $\mathbb{P}$ is a pair $c = (B, Y)$ such that $B\subseteq A$, $Y\subseteq X$, and $B^{\uparrow} = Y$ and $Y^{\downarrow} = B$. Given a formal concept $c = (B,Y)$ we will often write $\val{c}$ for $B$ and $\descr{c}$ for $Y$ and, consequently, $c = (\val{c}, \descr{c})$.  The set $L(\mathbb{P})$  of the formal concepts of $\mathbb{P}$ can be partially ordered as follows: for any $c = (B_1, Y_1), d = (B_2, Y_2)\in L(\mathbb{P})$, \[c\leq d\quad \mbox{ iff }\quad B_1\subseteq B_2 \quad \mbox{ iff }\quad Y_2\subseteq Y_1.\]
	With this order, $L(\mathbb{P})$ is a complete lattice, the {\em concept lattice} $\mathbb{P}^+$ of $\mathbb{P}$. Any complete lattice $\mathbb{L}$ is isomorphic to the concept lattice $\mathbb{P}^+$ of some polarity $\mathbb{P}$.} $\mathbb{P_X} = (Z_A,Z_X, I_{E^{c}})$ where $Z_A = Z = Z_X$ and  $I_{E^{c}}\subseteq Z_A\times Z_X$ is defined as $aI_{E^{c}} x$ iff $aE^cx$.  More generally, any relation $R\subseteq Z\times Z$ `lifts' to relations $I_{R^c}\subseteq Z_A\times Z_X$ and $J_{R^c}\subseteq Z_X\times Z_A$
defined as $aI_{R^c} x$ iff $a R^c x$ and $xJ_{R^c} a$ iff $x R^c a$.
The next lemma follows directly from these definitions:
\begin{lemma}
	\label{lemma:round and square brackets}
	For any relation $R\subseteq Z \times Z$ and any $Y, B \subseteq Z$,
	\[I^{(0)}_{R^c}[Y] = R^{[0]}[Y]\quad  I^{(1)}_{R^c}[B] = R^{[1]}[B]\quad J^{(0)}_{R^c}[B] = R^{[0]}[B]\quad  J^{(1)}_{R^c}[Y] = R^{[1]}[Y].\]
\end{lemma}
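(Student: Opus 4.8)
The plan is to unwind both sides of each claimed identity directly from the definitions in \eqref{eq:def;round brackets} and \eqref{eq:def:square brackets}, together with the defining clauses of the lifted relations $I_{R^c}$ and $J_{R^c}$, and observe that they coincide verbatim. There is essentially no content beyond bookkeeping about which argument of a relation one is quantifying over and the translation $a I_{R^c} x \iff a R^c x$, $x J_{R^c} a \iff x R^c a$.

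Concretely, for the first identity I would compute $I^{(0)}_{R^c}[Y] = \{a \mid \forall x (x \in Y \Rightarrow a I_{R^c} x)\}$ by definition of $(\cdot)^{(0)}$ applied to the relation $I_{R^c} \subseteq Z_A \times Z_X$, then rewrite $a I_{R^c} x$ as $a R^c x$, obtaining $\{a \mid \forall x (x \in Y \Rightarrow a R^c x)\}$, which is exactly $R^{[0]}[Y]$ by \eqref{eq:def:square brackets}. The second identity is handled symmetrically: $I^{(1)}_{R^c}[B] = \{x \mid \forall a (a \in B \Rightarrow a I_{R^c} x)\} = \{x \mid \forall a (a \in B \Rightarrow a R^c x)\} = R^{[1]}[B]$. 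For the third and fourth, the only new ingredient is that $J_{R^c}$ reverses the roles: $x J_{R^c} a \iff x R^c a$, so $J^{(0)}_{R^c}[B] = \{x \mid \forall a(a \in B \Rightarrow x J_{R^c} a)\} = \{x \mid \forall a(a \in B \Rightarrow x R^c a)\}$; here one must check this equals $R^{[0]}[B]$, i.e.\ that $\{x \mid \forall a(a \in B \Rightarrow x R^c a)\}$ matches the clause $T^{[0]}[B] = \{u \mid \forall v(v \in B \Rightarrow u T^c v)\}$ with $T = R$ — it does, reading $x$ as $u$ and $a$ as $v$. Similarly $J^{(1)}_{R^c}[Y] = \{a \mid \forall x(x \in Y \Rightarrow a R^c x)\}$ wait — rather $\{a \mid \forall x (x \in Y \Rightarrow x J_{R^c} a)\} = \{a \mid \forall x(x \in Y \Rightarrow x R^c a)\}$, which is $R^{[1]}[Y]$ read as $T^{[1]}[Y] = \{v \mid \forall u(u \in Y \Rightarrow u T^c v)\}$ with $u = x$, $v = a$.

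The one genuine subtlety — the ``main obstacle'', though it is minor — is keeping straight that in the pair $(\cdot)^{(1)}/(\cdot)^{(0)}$ the bracketed superscript records \emph{which side} of the relation is being filled, and that $R$ is not assumed symmetric, so $R^c a x$ and $R^c x a$ must not be conflated. Because $Z_A = Z_X = Z$ as underlying sets, it is tempting to collapse $I_{R^c}$ and $J_{R^c}$; the point of the lemma is precisely that they are different relations (one is the transpose of the other up to complementation), and the four identities pair them with $R^{[\cdot]}$ in the two possible ways. So in writing the proof I would be explicit about the direction of each relation at each step. Once that is pinned down, each of the four equalities is a one-line rewrite, and the lemma ``follows directly from the definitions'' as stated.
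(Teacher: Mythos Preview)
Your proposal is correct and matches the paper's approach exactly: the paper simply asserts that the lemma ``follows directly from these definitions'' without writing out any details, and what you have done is precisely the definition-chasing the paper alludes to. The mid-course correction in the fourth identity lands in the right place, so there is nothing to fix.
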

The complete lattice $\X^{+}$ associated with a graph $\X$ is defined as the concept lattice of $\mathbb{P_X}$.
For any lattice $\mathbb{L}$, let $\mathsf{Flt}(\mathbb{L})$ and $\mathsf{Idl}(\mathbb{L})$ denote the set of filters and ideals of $\mathbb{L}$, respectively. The graph associated with $\mathbb{L}$ is $\mathbb{X_L}:=(Z,E)$ where $Z$ is the set of tuples
$(F, J)\in \mathsf{Flt}(\mathbb{L})\times \mathsf{Idl}(\mathbb{L})$ such that  $F\cap J = \varnothing$. For $z \in Z$, we denote by $F_z$ the filter part of $z$ and by $J_z$ the ideal part of $z$. Clearly, filter parts and ideal parts of states of $\mathbb{X_L}$ must be proper.  The (reflexive) $E$ relation is defined by 
$zEz'$ if and only if $F_z \cap J_{z'}= \emptyset$. 

\begin{definition}\label{Def:Canon:Ext}\cite[Section 2]{gehrke2001bounded}
Let $\mathbb{L}$ be a (bounded) sublattice of a complete lattice $\mathbb{L}'$.
\begin{enumerate}
\item $\mathbb{L}$ is {\em dense} in\, $\mathbb{L}'$ if every element of\, $\mathbb{L}'$ can be expressed both as a join of meets and as a meet of joins of elements from\, $\mathbb{L}$.
\item $\mathbb{L}$ is {\em compact} in\, $\mathbb{L}'$ if, for all $S, T \subseteq L$, if $\bigvee S\leq \bigwedge T$ then $\bigvee S'\leq \bigwedge T'$ for some finite $S'\subseteq S$ and $T'\subseteq T$.
\item The {\em canonical extension} of a lattice\, $\mathbb{L}$ is a complete lattice\, $\mathbb{L}^{\delta}$ containing\, $\mathbb{L}$ as a dense and compact sublattice.
\end{enumerate}
\end{definition}
The canonical extension of any bounded lattice exists~\cite[Proposition 2.6]{gehrke2001bounded} and is unique up to isomorphism~\cite[Proposition 2.7]{gehrke2001bounded}.

\begin{proposition}\label{Prop:Xplus:canonicalextension}\cite[Proposition 4.2]{craighaviar2014recon}
For any lattice $\mathbb{L}$, the complete lattice $\mathbb{X_L}^+$ is the canonical extension of\, $\mathbb{L}$. 
\end{proposition}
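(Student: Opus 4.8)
The plan is to prove the claim by verifying that the canonical embedding $e\colon \mathbb{L}\hookrightarrow \mathbb{X_L}^+$ — sending each $a\in L$ to a suitable formal concept of $\mathbb{P_{X_L}}$ — is a lattice embedding whose image is dense and compact, and then to invoke the uniqueness (up to isomorphism) of the canonical extension, stated just above. So the three things to establish are: (i) a well-defined lattice embedding $e$, (ii) density, (iii) compactness.

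\textbf{Step 1: The embedding.} For $a\in L$, set $\hat{a}:=\{z\in Z\mid a\in F_z\}$ and $\check{a}:=\{z\in Z\mid a\in J_z\}$, where $Z$ is the set of filter--ideal pairs $(F,J)$ with $F\cap J=\varnothing$. First I would check that $I_{E^c}^{(1)}[\hat a] = \check a$ and $I_{E^c}^{(0)}[\check a]=\hat a$, so that $e(a):=(\hat a,\check a)$ is a formal concept of $\mathbb{P_{X_L}}$; here I unpack the definition $z E z'$ iff $F_z\cap J_{z'}=\varnothing$, i.e.\ $z E^c z'$ iff $F_z\cap J_{z'}\neq\varnothing$, and use that $a\in F_z$ together with $a\in J_{z'}$ witnesses $F_z\cap J_{z'}\neq\varnothing$, while the converse inclusions need the standard fact that a filter and an ideal meeting non-trivially can be separated by an element (prime-filter/ideal--style reasoning, but here only requiring that $F_z\cap J_{z'}\neq\varnothing$ forces a common element $b$, and then $z$ lies in $\hat b$ and $z'$ in $\check b$). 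Then $e$ is order-preserving and order-reflecting because $a\leq b$ iff $\hat a\subseteq \hat b$: the forward direction is clear, and for the reverse, if $a\not\leq b$ then the principal filter ${\uparrow}a$ and the principal ideal ${\downarrow}b$ are disjoint, giving a state $z=({\uparrow}a,{\downarrow}b)\in \hat a\setminus \hat b$. Preservation of $\wedge,\vee,\top,\bot$ is then routine from the concept-lattice order. This gives (i).

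\textbf{Step 2: Density and compactness.} For density, I would show every concept $c=(B,Y)\in \mathbb{X_L}^+$ equals the join of the $e(a)$ below it and the meet of the $e(a)$ above it; concretely, for $z\in Z$ the concept $(\hat F_z, \ldots)$ generated appropriately is a meet of $e(a)$'s with $a\in F_z$, and dually with $J_z$, so arbitrary concepts, being joins of the "point" concepts $\bigwedge\{e(a): a\in F_z\}$ over $z\in B$ (in the $\val{\cdot}$-coordinate) fall into the required form. For compactness, suppose $\bigvee e[S]\leq \bigwedge e[T]$ in $\mathbb{X_L}^+$ with $S,T\subseteq L$. If no finite subjoin were below a finite submeet, then the filter $F$ generated by $S$ and the ideal $J$ generated by $T$ would be disjoint (this is exactly the contrapositive: $F\cap J\neq\varnothing$ means some finite meet from $S$... wait, from $S$ we get joins — rather, $F$ generated by finite \emph{lower} bounds; I should be careful to take $F$ = filter generated by $\{\bigvee S' : S'\subseteq S \text{ finite}\}$ and $J$ = ideal generated by finite submeets of $T$), so $z=(F,J)\in Z$ would witness $z\in \bigcap_{s\in S}\hat s = \val{\bigvee e[S]}$ but $z\notin \val{\bigwedge e[T]}$, contradicting the assumed inequality. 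Hence some finite $S'\subseteq S$, $T'\subseteq T$ satisfy $\bigvee S'\leq \bigwedge T'$, which is (iii).

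\textbf{Main obstacle.} The delicate point is the disjointness/separation argument in both the well-definedness of $e(a)$ as a concept and in compactness: one must confirm that for states of $\mathbb{X_L}$ — which are \emph{arbitrary} filter--ideal pairs with empty intersection, not prime ones — the relevant filters and ideals generated by the data really are disjoint exactly when no finite inequality holds, and that $F\cap J=\varnothing$ is the right closure condition making $(F,J)\in Z$. This is bookkeeping with filter/ideal generation in a general (non-distributive) bounded lattice, and it is where the proof must be done carefully; once it is in place, density is a short computation and the conclusion follows from uniqueness of canonical extensions (Definition~\ref{Def:Canon:Ext} and the cited existence/uniqueness facts). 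In fact, since Proposition~\ref{Prop:Xplus:canonicalextension} is quoted from \cite{craighaviar2014recon}, in the paper it suffices to cite that reference and sketch only this embedding-density-compactness skeleton.
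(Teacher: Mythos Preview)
The paper does not prove this proposition at all; it is imported wholesale from \cite{craighaviar2014recon}, as you yourself note in your final sentence. Your sketch --- embed via $e(a)=(\hat a,\check a)$, verify density and compactness, then invoke uniqueness --- is exactly the standard argument behind that citation, so there is nothing to compare: you have supplied what the paper merely references. The well-definedness of $e(a)$ as a concept is in fact the computation the paper performs just before Lemma~\ref{lemma:truthlemma} (with $p$ in place of $a$), so for Step~1 you can point there.

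One genuine wrinkle explains your hesitation in Step~2: the compactness clause in Definition~\ref{Def:Canon:Ext}(2) is misstated --- as written (with $\bigvee S\leq\bigwedge T$) it is trivially satisfied by empty $S',T'$. The intended condition, as in \cite[Section~2]{gehrke2001bounded}, is that $\bigwedge S\leq\bigvee T$ in $\mathbb{L}'$ implies $\bigwedge S'\leq\bigvee T'$ in $\mathbb{L}$ for some finite $S'\subseteq S$, $T'\subseteq T$. With this correction your argument untangles: take $F$ to be the \emph{filter} generated by $S$ and $J$ the \emph{ideal} generated by $T$ (not the other way round); then $F\cap J\neq\varnothing$ iff some finite $\bigwedge S'\leq\bigvee T'$, and if no such finite subsets exist the state $z=(F,J)$ lies in $\bigcap_{s\in S}\hat s=\val{\bigwedge e[S]}$ and in $\bigcap_{t\in T}\check t=\descr{\bigvee e[T]}$, whence reflexivity $zEz$ contradicts $\bigwedge e[S]\leq\bigvee e[T]$.
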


Furthermore, from results in~\cite[Sections 5 and 6]{gehrke2001bounded}, we know that if 
$\mathbb{A}=(\mathbb{L},\Box,\Diamond)$ is an $\mathcal{L}$-algebra, then the additional operations can be extended to 
$\mathbb{X_L}^+$ in order to get a complete $\mathcal{L}$-algebra. 
%
%

\begin{definition}\label{def:graph:based:frame:and:model}
	A {\em graph-based} $\mathcal{L}$-\emph{frame}  is a structure $\mathbb{F} = (\mathbb{X},  R_{\Diamond}, R_{\Box})$ where $\mathbb{X} = (Z,E)$ is a reflexive graph,\footnote{\label{footnote: abbreviations} Applying the notation \eqref{eq:def:square brackets} to a graph-based $\mathcal{L}$-frame $\mathbb{F}$, we will sometimes abbreviate $E^{[0]}[Y]$ and $E^{[1]}[B]$ as $Y^{[0]}$ and $B^{[1]}$, respectively, for each $Y, B\subseteq Z$. If $Y= \{y\}$ and  $B = \{b\}$, we write $y^{[0]}$ and $b^{[1]}$ for $\{y\}^{[0]}$ and $\{b\}^{[1]}$, and write $Y^{[01]}$ and $B^{[10]}$ for $(Y^{[0]})^{[1]}$ and $(B^{[1]})^{[0]}$, respectively. Notice that, by Lemma \ref{lemma:round and square brackets}, $Y^{[0]} = I_{E^c}^{(0)}[Y] = Y^{\downarrow}$ and $B^{[1]} = I_{E^c}^{(1)}[B] = B^{\uparrow}$, where the maps $(\cdot)^\downarrow$ and $(\cdot)^\uparrow$ are those associated with the polarity $\mathbb{P_X}$.} and  $R_{\Diamond}$ and $R_{\Box}$  are binary relations on $Z$ satisfying the following  $E$-{\em compatibility} conditions (notation defined in \eqref{eq:def:square brackets}): for all $b,y \in Z$,
\begin{align*}
(R_\Box^{[0]}[y])^{[10]} &\subseteq R_\Box^{[0]}[y] \qquad   &(R_\Box^{[1]}[b])^{[01]} \subseteq R_\Box^{[1]}[b]\\
(R_\Diamond^{[0]}[b])^{[10]} &\subseteq R_\Diamond^{[0]}[b] \qquad   &(R_\Diamond^{[1]}[y])^{[01]} \subseteq R_\Diamond^{[1]}[y].
\end{align*}	
	
	The {\em complex algebra} of a graph-based $\mathcal{L}$-frame $\mathbb{F}= (\mathbb{X},  R_{\Diamond}, R_{\Box})$ is the complete $\mathcal{L}$-algebra $\mathbb{F}^+ = (\mathbb{X}^+, [R_\Box], \langle R_\Diamond\rangle),$
	where $\mathbb{X}^+$ is the concept lattice of 
	$\mathbb{P}_{\mathbb{X}}$,
	and $[R_\Box]$ and $\langle R_\Diamond\rangle$ are unary operations on 
	$\mathbb{P}_{\mathbb{X}}^+$
	defined as follows: for every $c = (\val{c}, \descr{c}) \in 
	\mathbb{P}_{\mathbb{X}}^+$,
		\[[R_\Box]c: = (R_{\Box}^{[0]}[\descr{c}], (R_{\Box}^{[0]}[\descr{c}])^{[1]}) \quad \mbox{ and }\quad \langle R_\Diamond\rangle c: = ((R_{\Diamond}^{[0]}[\val{c}])^{[0]}, R_{\Diamond}^{[0]}[\val{c}]).\]
	%
	
\end{definition}

The following lemma is an immediate consequence of Lemma~\ref{equivalents of I-compatible appendix} in the appendix, using Lemma~\ref{lemma:round and square brackets} and the observation in Footnote \ref{footnote: abbreviations}. 
\begin{lemma}\label{equivalents of I-compatible}
	\begin{enumerate}
		\item The following are equivalent for every graph $(Z,E)$ and every relation $R\subseteq Z\times Z$:
		\begin{enumerate}
			\item [(i)] $(R^{[0]}[y])^{[10]} \subseteq R^{[0]}[y]$  for every $y\in Z$;
			\item [(ii)]  $(R^{[0]}[Y])^{[10]} \subseteq R^{[0]}[Y]$ for every $Y\subseteq Z$;
			\item [(iii)] $R^{[1]}[B]=R^{[1]}[B^{[10]}]$ for every  $B\subseteq Z$.
		\end{enumerate}
		\item The following are equivalent for every  graph $(Z,E)$ and every relation $R\subseteq Z\times Z$:
		\begin{enumerate}
			\item [(i)] $(R^{[1]}[b])^{[01]} \subseteq R^{[1]}[b]$  for every $b\in Z$;
			\item [(ii)]  $(R^{[1]}[B])^{[01]} \subseteq R^{[1]}[B]$ for every $B\subseteq Z$;
			\item [(iii)] $R^{[0]}[Y]=R^{[0]}[Y^{[01]}]$ for every  $Y\subseteq Z$.
		\end{enumerate}
	\end{enumerate}
\end{lemma}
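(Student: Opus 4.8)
The statement reduces, via Lemma~\ref{lemma:round and square brackets} and the observation in Footnote~\ref{footnote: abbreviations}, to a purely relational fact about an arbitrary relation $R\subseteq Z\times Z$ and the polarity $\mathbb{P_X}$, namely that $E$-compatibility of $R$ (phrased with the square-bracket operators) is equivalent to the claim that $R^{[1]}[\cdot]$ (respectively $R^{[0]}[\cdot]$) factors through the Galois closure. Since the lemma is asserted to follow immediately from Lemma~\ref{equivalents of I-compatible appendix} in the appendix together with Lemma~\ref{lemma:round and square brackets}, the plan is simply to make these translations explicit; the two numbered items are dual, so I would prove item~1 in full and note that item~2 follows by the symmetric argument (swapping the roles of the two copies $Z_A$ and $Z_X$ of $Z$, equivalently replacing $R$ by its converse and $I_{R^c}$ by $J_{R^c}$).

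For item~1, first I would unwind the abbreviations: by Footnote~\ref{footnote: abbreviations}, for $Y,B\subseteq Z$ we have $Y^{[0]}=Y^\downarrow$ and $B^{[1]}=B^\uparrow$ where $(\cdot)^\downarrow,(\cdot)^\uparrow$ are the maps of $\mathbb{P_X}$, so $(\cdot)^{[10]}$ and $(\cdot)^{[01]}$ are exactly the two Galois closures $(\cdot)^{\uparrow\downarrow}$ and $(\cdot)^{\downarrow\uparrow}$. Then (i)$\Rightarrow$(ii): given $Y\subseteq Z$, write $Y=\bigcup_{y\in Y}\{y\}$ and apply Lemma~\ref{lemma: basic:square:brackets}(5) to get $R^{[0]}[Y]=\bigcap_{y\in Y}R^{[0]}[y]$; each $R^{[0]}[y]$ is Galois-closed by (i) together with Lemma~\ref{lemma: basic:square:brackets}(4) (the closure fixed points are exactly the sets of the form $R^{[0]}[\text{something}]$, and an intersection of closed sets is closed — this uses antitonicity, items (1) and (3) of Lemma~\ref{lemma: basic:square:brackets}), hence so is their intersection; thus $(R^{[0]}[Y])^{[10]}\subseteq R^{[0]}[Y]$. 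The implication (ii)$\Rightarrow$(i) is the instance $Y=\{y\}$. For the equivalence with (iii): $R^{[1]}[B]=R^{[1]}[B^{[10]}]$ always has $\subseteq$ by monotonicity-of-the-closure ($B\subseteq B^{[10]}$ and antitonicity of $R^{[1]}[\cdot]$, Lemma~\ref{lemma: basic:square:brackets}(1),(3)); the reverse inclusion $R^{[1]}[B^{[10]}]\subseteq R^{[1]}[B]$, using the adjunction Lemma~\ref{lemma: basic:square:brackets}(2), is equivalent to $B^{[10]}\subseteq R^{[0]}[R^{[1]}[B]]=(R^{[1]}[B])^{[0]}$, and since $B\subseteq(R^{[1]}[B])^{[0]}$ by Lemma~\ref{lemma: basic:square:brackets}(3), this holds precisely when $(R^{[1]}[B])^{[0]}$ is Galois-closed — i.e. when $(R^{[0]}[Y])^{[10]}\subseteq R^{[0]}[Y]$ for $Y=R^{[1]}[B]$, which is (ii), and conversely taking $B$ to range so that $R^{[1]}[B]$ covers all relevant $Y$ recovers the needed instances via Lemma~\ref{lemma: basic:square:brackets}(4).

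The only genuine subtlety — the part I would be most careful about — is the back-and-forth in the (ii)$\Leftrightarrow$(iii) step: one must check that as $B$ ranges over $\mathcal{P}(Z)$, the sets $R^{[1]}[B]$ hit enough subsets $Y$ to recover (ii) in full, not merely for $Y$ in the image of $R^{[1]}[\cdot]$. This is handled by Lemma~\ref{lemma: basic:square:brackets}(4): every Galois-closed $Y$ that is relevant, namely every $Y$ of the form $R^{[0]}[W]$, satisfies $R^{[0]}[Y]=R^{[0]}[Y^{[01]}]$ automatically, and the condition in (ii) only constrains the closure operator on sets already of the form $R^{[0]}[\cdot]$, so no information is lost. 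Modulo this bookkeeping, everything is a routine application of the five items of Lemma~\ref{lemma: basic:square:brackets} plus the dictionary of Lemma~\ref{lemma:round and square brackets}, and item~2 is the mirror image obtained by interchanging $(\cdot)^{[0]}\leftrightarrow(\cdot)^{[1]}$, $U'\leftrightarrow V'$ throughout.
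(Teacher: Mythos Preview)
Your overall strategy matches the paper's exactly: the lemma is the appendix Lemma~\ref{equivalents of I-compatible appendix} transported through Lemma~\ref{lemma:round and square brackets} and the abbreviations of Footnote~\ref{footnote: abbreviations}, and your (i)$\Leftrightarrow$(ii) argument via ``intersection of closed sets is closed'' is fine. But your handling of the equivalence with (iii) has problems.

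First, two slips. Antitonicity applied to $B\subseteq B^{[10]}$ yields $R^{[1]}[B^{[10]}]\subseteq R^{[1]}[B]$, so \emph{that} is the free direction; the substantive direction is $R^{[1]}[B]\subseteq R^{[1]}[B^{[10]}]$ --- you have the labels swapped. And you write $R^{[0]}[R^{[1]}[B]]=(R^{[1]}[B])^{[0]}$, but by Footnote~\ref{footnote: abbreviations} the superscript $(\cdot)^{[0]}$ abbreviates $E^{[0]}[\cdot]$, not $R^{[0]}[\cdot]$; keeping these two operators apart is precisely what the lemma is about.

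Second, and this is the genuine gap, the direction (iii)$\Rightarrow$(i). Your ``precisely when'' claim --- that $B^{[10]}\subseteq R^{[0]}[R^{[1]}[B]]$ holds iff $R^{[0]}[R^{[1]}[B]]$ is $E$-Galois-closed --- is only an implication (right to left), not an equivalence, so you cannot read off (ii) at $Y=R^{[1]}[B]$ from (iii) this way. Your appeal to Lemma~\ref{lemma: basic:square:brackets}(4) does not help either: that item concerns a single relation and delivers $R^{[0]}[Y]=R^{[0]}[R^{[1]}[R^{[0]}[Y]]]$, an identity for the $R$-closure, not for the $E$-closure $(\cdot)^{[10]}$ appearing in (ii). The correct instantiation goes the other way round: for arbitrary $x\in Z$ set $B:=R^{[0]}[x]$. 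Then $x\in R^{[1]}[B]$ by Lemma~\ref{lemma: basic:square:brackets}(2), hence by (iii) $x\in R^{[1]}[B^{[10]}]$, and Lemma~\ref{lemma: basic:square:brackets}(2) again gives $B^{[10]}\subseteq R^{[0]}[x]$, i.e.\ $(R^{[0]}[x])^{[10]}\subseteq R^{[0]}[x]$. This is exactly the paper's argument in the appendix.
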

For any graph-based $\mathcal{L}$-frame $\mathbb{F}$, let us define $R_{\Diamondblack}\subseteq Z\times Z$ by $x R_{\Diamondblack} a$ iff $aR_{\Box} x$, and $R_{\blacksquare}\subseteq Z\times Z$ by $a R_{\blacksquare} x$ iff $xR_{\Diamond} a$. Hence, for every $B, Y\subseteq Z$,
\begin{equation}
\label{eq:zero is 1 is zero}
R_{\Diamondblack}^{[0]}[B] = R_{\Box}^{[1]}[B] \quad R_{\Diamondblack}^{[1]}[Y] = R_{\Box}^{[0]}[Y] \quad R_{\blacksquare}^{[0]}[Y] = R_{\Diamond}^{[1]}[Y] \quad R_{\blacksquare}^{[1]}[B] = R_{\Diamond}^{[0]}[B].
\end{equation}
By Lemma \ref{equivalents of I-compatible}, the $E$-compatibility of $R_{\Box}$ and $ R_{\Diamond}$ guarantees that  the operations $[R_\Box], \langle R_\Diamond\rangle$ (as well as $[R_\blacksquare], \langle R_\Diamondblack\rangle$) are well defined on 
$\mathbb{X}^+$. 

\begin{lemma}
Let \,  $\mathbb{F} = (\mathbb{X}, R_{\Box}, R_{\Diamond})$ be a graph-based $\mathcal{L}$-frame. Then  
	the algebra\, $\mathbb{F}^+ = (\mathbb{X}^+, [R_{\Box}], \langle R_{\Diamond}\rangle)$ is a complete  lattice expansion such that $[R_\Box]$ is completely meet-preserving and $\langle R_\Diamond\rangle$ is completely join-preserving.
\end{lemma}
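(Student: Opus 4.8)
The plan is to verify the three assertions---that $\mathbb{F}^+$ is a complete lattice expansion, that $[R_\Box]$ is completely meet-preserving, and that $\langle R_\Diamond\rangle$ is completely join-preserving---by reducing all of them to the established properties of the $(\cdot)^{[0]}$, $(\cdot)^{[1]}$ polarity operators collected in Lemmas~\ref{lemma: basic:square:brackets} and~\ref{equivalents of I-compatible}. First I would recall that $\mathbb{X}^+$ is, by definition, the concept lattice of the polarity $\mathbb{P}_{\mathbb{X}}$, and that every concept lattice is a complete lattice; so the underlying order structure is already complete, and it only remains to check that $[R_\Box]$ and $\langle R_\Diamond\rangle$ are well-defined order-preserving (indeed lattice-homomorphic on the relevant side) operations on it. Well-definedness---i.e.\ that the pairs given in Definition~\ref{def:graph:based:frame:and:model} are genuinely formal concepts of $\mathbb{P}_{\mathbb{X}}$---is exactly what the $E$-compatibility conditions buy us, via the equivalences (i)$\Leftrightarrow$(iii) in Lemma~\ref{equivalents of I-compatible}: for instance $R_\Box^{[0]}[\descr{c}]$ equals $R_\Box^{[0]}[\descr{c}^{[01]}]$, and since $\descr{c}$ is already Galois-closed this makes $(R_\Box^{[0]}[\descr{c}], (R_\Box^{[0]}[\descr{c}])^{[1]})$ a concept. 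This is remarked in the text just before the statement, so I would cite it rather than re-prove it.

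For complete meet-preservation of $[R_\Box]$, I would take an arbitrary family $\{c_i\}_{i\in I}$ of concepts and compute $\bigwedge_i c_i$. In a concept lattice the meet is computed on the second (``descriptor'') coordinate by union followed by closure: $\descr{\bigwedge_i c_i} = \big(\bigcup_i \descr{c_i}\big)^{[01]}$, equivalently $\val{\bigwedge_i c_i} = \bigcap_i \val{c_i}$. Applying the definition of $[R_\Box]$, the $\val{\cdot}$-coordinate of $[R_\Box]\bigwedge_i c_i$ is $R_\Box^{[0]}\big[(\bigcup_i \descr{c_i})^{[01]}\big]$, which by Lemma~\ref{equivalents of I-compatible}(2)(iii) equals $R_\Box^{[0]}\big[\bigcup_i \descr{c_i}\big]$, and this equals $\bigcap_i R_\Box^{[0]}[\descr{c_i}]$ by Lemma~\ref{lemma: basic:square:brackets}(5). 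On the other side, the $\val{\cdot}$-coordinate of $\bigwedge_i [R_\Box]c_i$ is $\bigcap_i \val{[R_\Box]c_i} = \bigcap_i R_\Box^{[0]}[\descr{c_i}]$ directly from the definition of $[R_\Box]$. Since concepts are determined by either coordinate, equality of the $\val{\cdot}$-coordinates gives $[R_\Box]\bigwedge_i c_i = \bigwedge_i [R_\Box]c_i$; the nullary case ($I=\varnothing$) is the top element and is handled the same way (the empty intersection is $Z$). The argument for $\langle R_\Diamond\rangle$ is entirely dual: joins in the concept lattice are computed on the $\val{\cdot}$-coordinate by union-then-closure, $\langle R_\Diamond\rangle$ is defined symmetrically using $R_\Diamond^{[0]}[\val{c}]$, and one invokes Lemma~\ref{equivalents of I-compatible}(1)(iii) together with Lemma~\ref{lemma: basic:square:brackets}(5) to push $R_\Diamond^{[0]}$ through an arbitrary union, obtaining $\langle R_\Diamond\rangle\bigvee_i c_i = \bigvee_i \langle R_\Diamond\rangle c_i$.

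The only genuinely delicate point is bookkeeping about which coordinate carries meets and which carries joins in $\mathbb{X}^+$, and correspondingly making sure the right clause of Lemma~\ref{equivalents of I-compatible} is applied in each case ((2) for $[R_\Box]$ acting on descriptors, (1) for $\langle R_\Diamond\rangle$ acting on extents); getting this matching right is the crux, and once it is set up the computations are the routine chain of equalities sketched above. I would present the $[R_\Box]$ case in full and then say ``the claim for $\langle R_\Diamond\rangle$ follows by an order-dual argument, using Lemma~\ref{equivalents of I-compatible}(1) in place of Lemma~\ref{equivalents of I-compatible}(2)'' to avoid repetition.
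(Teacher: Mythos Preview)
Your proposal takes a genuinely different route from the paper. The paper does not compute meets and joins directly; instead it exhibits the adjoints $\langle R_{\Diamondblack}\rangle \dashv [R_{\Box}]$ and $\langle R_{\Diamond}\rangle \dashv [R_{\blacksquare}]$ (using only Lemma~\ref{lemma: basic:square:brackets}(2) and the identities in~\eqref{eq:zero is 1 is zero}), and then invokes the standard fact that right (resp.\ left) adjoints preserve all meets (resp.\ joins). Your direct computation for $[R_{\Box}]$ is perfectly correct: $\descr{\bigwedge_i c_i} = (\bigcup_i \descr{c_i})^{[01]}$, Lemma~\ref{equivalents of I-compatible}(2)(iii) strips the $^{[01]}$, and Lemma~\ref{lemma: basic:square:brackets}(5) distributes $R_{\Box}^{[0]}$ over the union. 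So far so good.

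The gap is in your ``order-dual'' claim for $\langle R_{\Diamond}\rangle$. The computation you sketch requires the identity $R_{\Diamond}^{[0]}[B^{[10]}] = R_{\Diamond}^{[0]}[B]$ (to strip the $^{[10]}$-closure off the union $\bigcup_i \val{c_i}$), and you cite Lemma~\ref{equivalents of I-compatible}(1)(iii) for this. But clause (1)(iii) reads $R^{[1]}[B] = R^{[1]}[B^{[10]}]$---the superscript on $R$ is $[1]$, not $[0]$---while clause (2)(iii) reads $R^{[0]}[Y] = R^{[0]}[Y^{[01]}]$---the closure is $^{[01]}$, not $^{[10]}$. Neither matches what you need, and since $(\cdot)^{[10]}$ and $(\cdot)^{[01]}$ are genuinely different closure operators when $E$ is not symmetric, the identity you want does not fall out of the stated $E$-compatibility conditions for $R_{\Diamond}$ via Lemma~\ref{equivalents of I-compatible} in the way you indicate. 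The ``bookkeeping'' you flag as the delicate point is exactly where the argument breaks. The paper's adjoint route sidesteps this entirely: once $[R_{\blacksquare}]$ is the right adjoint of $\langle R_{\Diamond}\rangle$, complete join-preservation is automatic, and the verification of the adjunction needs only the Galois connection of Lemma~\ref{lemma: basic:square:brackets}(2), not any closure-stripping identity.
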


\begin{proof}
	As mentioned above, the $E$-compatibility of $R_{\Box}$ and $R_{\Diamond}$ guarantees that the maps 
	$[R_\Box],\langle R_{\Diamond}\rangle, [R_\blacksquare],\langle R_{\Diamondblack}\rangle: \mathbb{X}^+\to \mathbb{X}^+$ 
	are well defined. Since 
	$\mathbb{X}^{+}$ 
	is a complete lattice, by \cite[Proposition 7.31]{davey2002introduction}, to show that $[R_\Box]$ is completely meet-preserving and $\langle R_\Diamond\rangle$ is completely join-preserving, it is enough to show that  $\langle R_{\Diamondblack}\rangle$ is the left adjoint of $[R_{\Box}]$ and $[R_\blacksquare]$ is the right adjoint of $\langle R_\Diamond\rangle$. For any $c = (\val{c}, \descr{c}),d= (\val{d}, \descr{d})\in \mathbb{X}^{+}$,
	{\small{
			\begin{center}
				\begin{tabular}{r c l l}
					$\langle R_{\Diamondblack}\rangle c\le d $ &
					iff & $ \descr{d} \subseteq R_{\Diamondblack}^{[0]}[\val{c}] $ & ordering of concepts\\
					& iff & $ \descr{d} \subseteq R_{\Box}^{[1]}[\val{c}] $   & \eqref{eq:zero is 1 is zero}  \\
					&iff& $ \val{c} \subseteq R_{\Box}^{[0]}[\descr{d}] $& Lemma \ref{lemma: basic:square:brackets}.2\\
					&iff& $ c\le [R_{\Box}]d .$& ordering of concepts
					
				\end{tabular}
			\end{center}
	}}
	Likewise, one shows that $[R_\blacksquare]$ is the right adjoint of $\langle R_\Diamond\rangle$.
\end{proof}


For an $\mathcal{L}$-algebra $\mathbb{L}$ and $K \subseteq L$, we let 
$$\Box K = \{\, \Box u \mid u \in K\,\} \qquad \text{and} \qquad \Diamond K = \{\, \Diamond v \mid v \in K\,\}.$$ 
Further, for $K \subseteq L$, we denote by $\lceil K \rceil$ ($\lfloor K \rfloor$) the ideal (filter) generated by $K$. 

\begin{lemma} \label{lemma:filtidl:gen:intersect} Let\, $\mathbb{L}$ be an $\mathcal{L}$-algebra with $F \in \mathsf{Flt}(\mathbb{L})$ and $J \in \mathsf{Idl}(\mathbb{L})$. Then 
\begin{enumerate}
\item $F \cap \Box J \neq \emptyset$ \, if and only if \, $F \cap \lceil \Box J \rceil \neq \emptyset$; 
\item $\Diamond F \cap  J \neq \emptyset$ \, if and only if \, $\lfloor \Diamond F \rfloor \cap J \neq \emptyset$. 
\end{enumerate}
\end{lemma}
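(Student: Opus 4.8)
The two statements are dual (swap $\Box$ for $\Diamond$, filters for ideals, $\lceil\cdot\rceil$ for $\lfloor\cdot\rfloor$, and use that $\Diamond$ is join-preserving where $\Box$ is meet-preserving), so I would prove statement 1 in full and then indicate that statement 2 follows by the analogous argument. For statement 1, the nontrivial direction is right-to-left: from $F\cap\lceil\Box J\rceil\neq\emptyset$ I must produce an element of $F\cap\Box J$. The left-to-right direction is immediate since $\Box J\subseteq\lceil\Box J\rceil$.

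\textbf{Key steps.} Suppose $a\in F\cap\lceil\Box J\rceil$. Since $\lceil\Box J\rceil$ is the ideal generated by $\Box J=\{\Box u\mid u\in J\}$, and every element of the generated ideal lies below a finite join of generators, there are $u_1,\dots,u_n\in J$ with $a\leq \Box u_1\vee\cdots\vee\Box u_n$. Now I use the normality axiom for $\Diamond$'s dual — wait, more precisely I use that $\Box$ distributes over finite meets, not joins, so the join of $\Box u_i$'s is \emph{not} obviously a single $\Box(\cdot)$. This is exactly the main obstacle (see below). The fix is to observe that $J$ is an ideal, hence \emph{directed}: there is a single $u\in J$ with $u_1,\dots,u_n\leq u$. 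By monotonicity of $\Box$, $\Box u_i\leq\Box u$ for each $i$, so $\Box u_1\vee\cdots\vee\Box u_n\leq\Box u$, whence $a\leq\Box u$. Since $a\in F$ and $F$ is an up-set (filter), $\Box u\in F$; and $\Box u\in\Box J$ because $u\in J$. Therefore $\Box u\in F\cap\Box J$, as required.

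\textbf{Dual step.} For statement 2, assume $b\in\lfloor\Diamond F\rfloor\cap J$. Every element of the filter generated by $\Diamond F$ lies above a finite meet $\Diamond v_1\wedge\cdots\wedge\Diamond v_n$ with $v_1,\dots,v_n\in F$; since $F$ is a filter it is \emph{down-directed}, so there is $v\in F$ below all $v_i$, and monotonicity of $\Diamond$ gives $\Diamond v\leq\Diamond v_i$ for each $i$, hence $\Diamond v\leq\Diamond v_1\wedge\cdots\wedge\Diamond v_n\leq b$. As $b\in J$ and $J$ is a down-set, $\Diamond v\in J$; and $\Diamond v\in\Diamond F$ since $v\in F$. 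So $\Diamond v\in\lfloor\Diamond F\rfloor\cap J$ — actually more directly $\Diamond v\in\Diamond F\cap J$, which is what is wanted.

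\textbf{Main obstacle.} The only subtlety is that $\Box$ preserves finite \emph{meets} while the description of $\lceil\Box J\rceil$ involves finite \emph{joins} of the $\Box u_i$; one might be tempted to look for a normality law relating $\Box$ to $\vee$, which does not hold in a general lattice expansion. The resolution is entirely order-theoretic and does not use normality at all: it uses only that $J$ is \emph{directed} (as an ideal) together with monotonicity of $\Box$, so the finite join collapses under a single $\Box u$. Dually for $\Diamond$ and the filter $F$. Hence no appeal to the modal axioms beyond monotonicity of $\Box,\Diamond$ is needed.
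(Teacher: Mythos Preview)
Your proof is correct and follows the same approach as the paper's. The paper's argument is essentially identical: from an element of $F$ lying below $\Box u_1\vee\cdots\vee\Box u_n$ with $u_i\in J$, it takes $u=u_1\vee\cdots\vee u_n\in J$ (your ``directed'' upper bound, made explicit), uses monotonicity of $\Box$ to get $\Box u_1\vee\cdots\vee\Box u_n\leq\Box u\in F$, and concludes $\Box u\in F\cap\Box J$; item 2 is declared dual and omitted.
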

\begin{proof}
Let us prove item 1. The left-to-right direction is immediate since $\Box J\subseteq \lceil \Box J \rceil$. Conversely, assume that  there are elements $u_1,\ldots,u_n \in J$ such that $ \Box u_1\vee \cdots \vee \Box u_n \in F$. Because $\Box$ is monotone and $F$ is upward closed, then  $\Box( u_1\vee \cdots\vee  u_n) \in F$. Because $u_1,\ldots,u_n \in J$ and $J$ is an ideal, then  $u_1\vee \cdots \vee  u_n\in J$, which completes the proof that $F \cap \Box J \neq \emptyset$. The proof of item 2 is similar and omitted.
\end{proof}

\begin{definition}
	Given a complete $\mathcal{L}$-algebra $\mathbb{A} = (\mathbb{L}, \Box, \Diamond)$ we define its associated $\mathcal{L}$-frame to be the structure $\mathbb{F_A} = (\mathbb{X_L}, R_{\Box}, R_{\Diamond})$ where 
	$R_{\Box}, R_{\Diamond} \subseteq Z \times Z$ 
	are given by $x R_{\Box}y$ iff $F_x \cap \Box J_y = \emptyset$ 
	and $xR_{\Diamond}y$ iff $J_x \cap \Diamond F_y = \emptyset$. 	
\end{definition}
\begin{proposition}\label{prop:alg-to-frame}
	For any $\mathcal{L}$-algebra $\mathbb{A}$, the associated $\mathcal{L}$-frame $\mathbb{F_A}$ is a graph-based $\mathcal{L}$-frame.
\end{proposition}

\begin{proof} We show that $(R_{\Box}^{[0]}[y])^{[10]} \subseteq R_{\Box}^{[0]}[y]$. The other three properties will follow by similar arguments. 
With the help of Lemma~\ref{lemma:filtidl:gen:intersect}(1), we observe that
$$R_{\Box}^{[0]}[y] = \{\, u \in Z \mid  (u, y)\notin R_{\Box} \,\}= \{\, u \in Z \mid F_u \cap \Box J_y \neq \emptyset\,\} =\{\, u \in Z \mid F_u \cap \lceil \Box J_y 
\rceil \neq \emptyset\,\}.$$
We have 
\begin{align*}
(R_{\Box}^{[0]}[y])^{[1]}  &= \{\, z \in Z \mid \forall u (u \in R_{\Box}^{[0]}[y] \Rightarrow (u,z) \notin E)\,\} \\
&= \{\, z \in Z \mid \forall u( F_u \cap \Box J_y \neq \emptyset \Rightarrow F_u \cap J_z \neq \emptyset)\,\}\\
&= \{\, z \in Z \mid \Box J_y \subseteq J_z \,\}. 
\end{align*}
Hence 
\begin{center}
\begin{tabular}{rcll}
$a \in (R_{\Box}^{[0]}[y])^{[10]}$ & iff & $\forall z \in Z (\Box J_y \subseteq J_z \Rightarrow (a,z) \notin E )$ \\
& iff & $\forall z \in Z (\Box J_y \subseteq J_z \Rightarrow F_a \cap J_z \neq\emptyset)$ \\
& iff & $F_a \cap \lceil \Box J_y \rceil \neq \emptyset$ \\
& iff & $F_a \cap \Box J_y \neq \emptyset$ & Lemma \ref{lemma:filtidl:gen:intersect}\\
& iff & $a\in R_{\Box}^{[0]}[y]$. 
\end{tabular}
\end{center}

\end{proof}

%

\begin{definition}
	A {\em graph-based} $\mathcal{L}$-\emph{model} is a tuple $\mathbb{M} = (\mathbb{F}, V)$ where $\mathbb{F}$ is a graph-based $\mathcal{L}$-frame and $V: \mathsf{Prop}\to \mathbb{F}^+$. Since $V(p)$ is therefore a formal concept, we will write $V(p) = (\val{p}, \descr{p})$.
	%
	%
	%
	%
\end{definition}
For every graph-based $\mathcal{L}$-model $\mathbb{M} = (\mathbb{F}, V)$,  the valuation $V$ can be extended compositionally to all $\mathcal{L}$-formulas as follows:
{\small{
		\begin{center}
			\begin{tabular}{r c l c r cl}
				$V(p)$ & $ = $ & $(\val{p}, \descr{p})$\\
				$V(\top)$ & $ = $ & $(Z, \emptyset)$ &\quad& $V(\bot)$ & $ = $ & $(\emptyset, Z)$\\
				$V(\phi\wedge\psi)$ & $ = $ & $(\val{\phi}\cap \val{\psi}, (\val{\phi}\cap \val{\psi})^{[1]})$ &&
				$V(\phi\vee\psi)$ & $ = $ & $((\descr{\phi}\cap \descr{\psi})^{[0]}, \descr{\phi}\cap \descr{\psi})$\\
				$V(\Box\phi)$ & $ = $ & $(R_{\Box}^{[0]}[\descr{\phi}], (R_{\Box}^{[0]}[\descr{\phi}])^{[1]})$ &&
				$V(\Diamond\phi)$ & $ = $ & $((R_{\Diamond}^{[0]}[\val{\phi}])^{[0]}, R_{\Diamond}^{[0]}[\val{\phi}])$\\
			\end{tabular}
		\end{center}
}}
and moreover, the existence of the adjoints of $[R_{\Box}]$ and $\langle R_\Diamond \rangle$ supports the interpretation of the following expansion:
{\small{
		\begin{center}
			\begin{tabular}{r c l c r c l}
				$V(\blacksquare\phi)$ & $ = $ & $(R_{\blacksquare}^{[0]}[\descr{\phi}], (R_{\blacksquare}^{[0]}[\descr{\phi}])^{[1]})$&$\quad$&
				$V(\Diamondblack\phi)$ & $ = $ & $((R_{\Diamondblack}^{[0]}[\val{\phi}])^{[0]}, R_{\Diamondblack}^{[0]}[\val{\phi}])$\\
				%
			\end{tabular}
\end{center}}}
Spelling out the definition above (cf.~\cite{conradie2015relational}), we can define the satisfaction and co-satisfaction relations $\mathbb{M}, \z \Vdash \phi$ and $\mathbb{M}, \z \succ \phi$
for every graph-based $\mathcal{L}$-model $\mathbb{M} = (\mathbb{F}, V)$, $\z \in Z$, and any $\mathcal{L}$-formula $\phi$,  by the following simultaneous recursion:
{\small{
		\begin{flushleft}
			\begin{tabular}{lllllll}
				$\mathbb{M}, \z \Vdash \bot$ &&never & &$\mathbb{M}, \z \succ \bot$ && always\\
				$\mathbb{M}, \z \Vdash \top$ &&always & &$\mathbb{M}, \z \succ \top$ &&never\\
				$\mathbb{M}, \z \Vdash p$ & iff & $\z\in \val{p}$ & &$\mathbb{M}, \z \succ p$ & iff & $\forall \z'[\z'E\z \Rightarrow \z' \not\Vdash p]$\\
				%
				%
				%
				$\mathbb{M}, \z \succ \phi \vee \psi$ &iff &$\mathbb{M},\z\succ \phi \text{ and } \mathbb{M},\z \succ \psi$%
				& &$\mathbb{M}, \z \Vdash \phi \vee \psi$ &iff &$\forall \z' [\z E\z'   \Rightarrow \mathbb{M},\z' \not\succ \phi \vee \psi]$\\
				$\mathbb{M}, \z \Vdash \phi \wedge \psi$ &iff &$\mathbb{M},\z\Vdash \phi \text{ and } \mathbb{M},\z \Vdash \psi$%
				& &$\mathbb{M},\z \succ \phi \wedge\psi$ &iff &$\forall \z'[\z' E\z \Rightarrow \mathbb{M},\z'\not\Vdash \phi \wedge \psi]$\\
				$\mathbb{M}, \z \succ \Diamond\phi$ &iff &$\forall \z' [\z R_{\Diamond}\z'  \Rightarrow \mathbb{M},\z' \not \Vdash \phi]$%
				& &$\mathbb{M}, \z \Vdash \Diamond\phi$ &iff &$\forall \z'[\z E\z' \Rightarrow \mathbb{M},\z' \not\succ \Diamond \phi]$\\
				$\mathbb{M}, \z \Vdash \Box \psi$ &iff &$\forall \z' [\z R_{\Box}\z'  \Rightarrow \mathbb{M},\z' \not\succ \psi]$%
				& &$\mathbb{M}, \z \succ \Box\psi$ &iff &$\forall \z' [\z' E\z  \Rightarrow \mathbb{M},\z' \not\Vdash \Box \psi]$\\
				%
				%
			\end{tabular}
		\end{flushleft}
}}
An $\mathcal{L}$-sequent $\phi \vdash \psi$ is \emph{true} in $\mathbb{M}$, denoted $\mathbb{M} \models \phi \vdash \psi$, if for all $\z, \z' \in Z$, if $\mathbb{M}, \z \Vdash \phi$ and $\mathbb{M}, \z' \succ \psi$ then $\z E^c z'$. An $\mathcal{L}$-sequent $\phi \vdash \psi$ is \emph{valid} in $\mathbb{F}$, denoted $\mathbb{F} \models \phi \vdash \psi$, if it is true in every model based on $\mathbb{F}$.

The next lemma follows immediately from the definition of an $\mathcal{L}$-sequent being true in a graph-based $\mathcal{L}$-model. 
\begin{lemma}
	\label{lemma:for:completeness}
	Let $\mathbb{F}$ be a graph-based $\mathcal{L}$-frame and $\phi\vdash \psi$ an $\mathcal{L}$-sequent. Then $\mathbb{F}\models \phi\vdash \psi$ \, iff \,  $\mathbb{F}^+ \models \phi\vdash \psi$. 
\end{lemma}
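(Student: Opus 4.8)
The statement to be proved is Lemma~\ref{lemma:for:completeness}: for a graph-based $\mathcal{L}$-frame $\mathbb{F}$ and an $\mathcal{L}$-sequent $\phi\vdash\psi$, we have $\mathbb{F}\models\phi\vdash\psi$ iff $\mathbb{F}^+\models\phi\vdash\psi$. The plan is to unwind both sides of the biconditional through the definitions and reduce everything to a single bridging fact: for every graph-based $\mathcal{L}$-model $\mathbb{M}=(\mathbb{F},V)$, every $\z\in Z$, and every $\mathcal{L}$-formula $\chi$,
\begin{equation*}
\mathbb{M},\z\Vdash\chi \iff \z\in\val{\chi} \qquad\text{and}\qquad \mathbb{M},\z\succ\chi \iff \z\in\descr{\chi},
\end{equation*}
where $V(\chi)=(\val{\chi},\descr{\chi})$ is the formal concept assigned to $\chi$ by the compositional extension of $V$ into $\mathbb{F}^+$. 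Granting this, the two notions of ``$\phi\vdash\psi$ true in $\mathbb{M}$'' coincide: on the frame-semantics side, truth means that $\z\Vdash\phi$ and $\z'\succ\psi$ force $\z E^c\z'$; on the algebraic side, $\mathbb{F}^+\models\phi\vdash\psi$ means $V(\phi)\le V(\psi)$ in $\mathbb{F}^+$, i.e.\ $\val{\phi}\subseteq\val{\psi}$, equivalently $\descr{\psi}\subseteq\descr{\phi}$, equivalently (using that $(\val{\phi},\descr{\phi})$ is a concept of $\mathbb{P_X}$, so $\val{\phi}=\descr{\phi}^{[0]}$ and similarly for $\psi$) $\val{\phi}\subseteq\descr{\psi}^{[0]}$, which by the definition of $E^{[0]}$ in \eqref{eq:def:square brackets} says exactly that $\z E^c\z'$ whenever $\z\in\val{\phi}$ and $\z'\in\descr{\psi}$. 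Since this holds for every valuation $V$, quantifying over all models based on $\mathbb{F}$ gives the claimed equivalence $\mathbb{F}\models\phi\vdash\psi$ iff $\mathbb{F}^+\models\phi\vdash\psi$.

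The bridging fact itself is proved by simultaneous induction on $\chi$, following the recursive clauses for $\Vdash$ and $\succ$ and matching them against the compositional clauses for $V$. The base cases $\bot,\top,p$ are immediate from the definitions (for $p$, the $\succ$-clause $\forall\z'(\z'E\z\Rightarrow\z'\not\Vdash p)$ unwinds, via the inductive hypothesis $\z'\Vdash p\iff\z'\in\val p$, to $\z\in E^{[0]}[\val p]=\val p^{[0]}=\descr p$, using that $V(p)$ is a concept so $\val p^{[1]}=\descr p$... more precisely $\descr p = \val p^{[1]}$, and one checks $E^{[0]}[\val p]$ equals $\descr p$ because $\descr p^{[0]}=\val p$ and square brackets are a Galois connection by Lemma~\ref{lemma: basic:square:brackets}). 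For $\wedge$ and $\vee$ one uses Lemma~\ref{lemma: basic:square:brackets} together with the concept equations; for $\Box$ and $\Diamond$ one uses the definitions of $[R_\Box]$, $\langle R_\Diamond\rangle$ from Definition~\ref{def:graph:based:frame:and:model}, the $\succ$/$\Vdash$ clauses, and again Lemma~\ref{lemma: basic:square:brackets} to pass between $R^{[0]}$ and its double-bracket closure, the latter being legitimate precisely because $R_\Box$ and $R_\Diamond$ are $E$-compatible (Lemma~\ref{equivalents of I-compatible}). Note the ``double negation'' pattern in the $\Vdash$-clauses for $\Box,\Diamond,\vee$ and the $\succ$-clause for $\wedge$: e.g.\ $\z\Vdash\Diamond\phi$ iff $\forall\z'(\z E\z'\Rightarrow\z'\not\succ\Diamond\phi)$, which by the inductive step on the $\succ$-side becomes $\z\in(R_\Diamond^{[0]}[\val\phi])^{[00]}=(R_\Diamond^{[0]}[\val\phi])^{[0]}$ after the appropriate closure identity, matching $\val{\Diamond\phi}$.

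The main obstacle is bookkeeping rather than conceptual: one has to be careful that the compositional $V$ really does land in $\mathbb{F}^+$ at each step (this is where $E$-compatibility of $R_\Box,R_\Diamond$ and Lemma~\ref{equivalents of I-compatible} are needed to know $[R_\Box],\langle R_\Diamond\rangle$ are well defined), and that the ``single negation'' clauses (like $\z\succ p$, $\z\Vdash\phi\vee\psi$) versus the ``direct'' clauses (like $\z\Vdash\phi\wedge\psi$) are matched to the correct side of each concept, so that the Galois-connection rewriting via Lemma~\ref{lemma: basic:square:brackets} goes in the right direction. Once the induction is set up symmetrically in $\Vdash$/$\succ$ this is routine. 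I would streamline the writeup by invoking that this correspondence is essentially the ``dual characterization'' already spelled out in \cite{conradie2015relational} and referenced just before the lemma, so the proof can legitimately be compressed to: ``immediate from the definition, since by a straightforward simultaneous induction $\mathbb{M},\z\Vdash\chi$ iff $\z\in\val\chi$ and $\mathbb{M},\z\succ\chi$ iff $\z\in\descr\chi$ for every $\mathcal{L}$-formula $\chi$.''
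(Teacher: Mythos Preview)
Your approach is correct and matches the paper's, which simply states that the lemma ``follows immediately from the definition of an $\mathcal{L}$-sequent being true in a graph-based $\mathcal{L}$-model'': since the recursive $\Vdash$/$\succ$ clauses are introduced in the paper explicitly as a \emph{spelling out} of the compositional extension of $V$ into $\mathbb{F}^+$, your bridging fact is definitional rather than something requiring a separate induction, and the rest is exactly your reduction of $V(\phi)\le V(\psi)$ to the $E^c$-condition via $\val{\psi}=\descr{\psi}^{[0]}$. Two small slips in your sketch worth fixing: in the $\succ p$ case the clause $\forall z'(z'Ez\Rightarrow z'\not\Vdash p)$ unwinds to $z\in E^{[1]}[\val p]=\val p^{[1]}=\descr p$ (not $E^{[0]}$), and in the $\Diamond$ case you get $(R_\Diamond^{[0]}[\val\phi])^{[0]}$ directly, with no intermediate ``$[00]$''.
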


The next proposition follows from the fact that $\mathbf{L}$ is sound and complete with respect to the class of $\mathcal{L}$-algebras and Lemma~\ref{lemma:for:completeness}.
\begin{proposition}
	The basic non-distributive modal logic $\mathbf{L}$ is sound w.r.t.~the class of graph-based $\mathcal{L}$-frames. I.e., if an  $\mathcal{L}$-sequent $\phi\vdash \psi$ is provable in  $\mathbf{L}$, then $\mathbb{F}\models \phi\vdash \psi$ for every graph-based frame $\mathbb{F}$.
\end{proposition}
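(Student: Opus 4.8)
The plan is to chain together the soundness of $\mathbf{L}$ over the class $\mathbb{LE}$ of $\mathcal{L}$-algebras with the observation, recorded in Lemma~\ref{lemma:for:completeness}, that validity of a sequent in a graph-based $\mathcal{L}$-frame $\mathbb{F}$ coincides with its validity in the complex algebra $\mathbb{F}^+$. Since $\mathbf{L}$ is already known to be sound and complete with respect to $\mathbb{LE}$ (via the Lindenbaum--Tarski argument sketched in the Preliminaries), it suffices to show that the complex algebra $\mathbb{F}^+$ of any graph-based $\mathcal{L}$-frame $\mathbb{F}$ is itself a member of $\mathbb{LE}$, i.e.~a normal lattice expansion.

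First I would recall that, by Definition~\ref{def:graph:based:frame:and:model} and the $E$-compatibility conditions, the concept lattice $\mathbb{X}^+$ is a complete lattice and the operations $[R_\Box]$ and $\langle R_\Diamond\rangle$ are well defined on it; this is exactly what the $E$-compatibility clauses buy us, via Lemma~\ref{equivalents of I-compatible}. Next I would invoke the lemma proved immediately above in the excerpt: $[R_\Box]$ is completely meet-preserving and $\langle R_\Diamond\rangle$ is completely join-preserving on $\mathbb{X}^+$. In particular $[R_\Box]$ preserves finite meets and the empty meet, so $[R_\Box]\top = \top$ and $[R_\Box](a\wedge b) = [R_\Box]a \wedge [R_\Box]b$; dually $\langle R_\Diamond\rangle\bot = \bot$ and $\langle R_\Diamond\rangle(a\vee b) = \langle R_\Diamond\rangle a \vee \langle R_\Diamond\rangle b$. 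These are precisely the identities defining $\mathbb{LE}$, so $\mathbb{F}^+ = (\mathbb{X}^+, [R_\Box], \langle R_\Diamond\rangle) \in \mathbb{LE}$.

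With this in hand the argument closes quickly. Suppose $\phi\vdash\psi$ is provable in $\mathbf{L}$. By soundness of $\mathbf{L}$ with respect to $\mathbb{LE}$, we have $\mathbb{LE}\models\phi\vdash\psi$, hence in particular $\mathbb{F}^+\models\phi\vdash\psi$ for every graph-based $\mathcal{L}$-frame $\mathbb{F}$, since $\mathbb{F}^+\in\mathbb{LE}$. By Lemma~\ref{lemma:for:completeness}, $\mathbb{F}^+\models\phi\vdash\psi$ is equivalent to $\mathbb{F}\models\phi\vdash\psi$. Therefore $\mathbb{F}\models\phi\vdash\psi$ for every graph-based $\mathcal{L}$-frame $\mathbb{F}$, as required.

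The proof is essentially a bookkeeping exercise; the only genuine content has already been discharged in the preceding lemmas, namely that $E$-compatibility makes the modal operations well defined on $\mathbb{X}^+$ and that they are completely meet- resp.~join-preserving. If there is a subtle point to watch, it is the bridge between the algebraic interpretation on $\mathbb{F}^+$ and the relational satisfaction/co-satisfaction semantics on $\mathbb{F}$: one must be sure that "truth of a sequent in $\mathbb{M}$" as defined via $\Vdash$ and $\succ$ really does match the order-theoretic condition $\val{\phi}\subseteq\val{\psi}$ (equivalently $\descr{\psi}\subseteq\descr{\phi}$) in $\mathbb{F}^+$. But this matching is exactly the content of Lemma~\ref{lemma:for:completeness}, which we are entitled to assume, so no further work is needed here.
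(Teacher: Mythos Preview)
Your proposal is correct and takes essentially the same approach as the paper, which simply states that the proposition follows from the algebraic soundness of $\mathbf{L}$ with respect to $\mathcal{L}$-algebras together with Lemma~\ref{lemma:for:completeness}. You have merely spelled out in detail the step the paper leaves implicit, namely that $\mathbb{F}^+\in\mathbb{LE}$ because $[R_\Box]$ and $\langle R_\Diamond\rangle$ are completely meet- and join-preserving.
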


Let $\mathbb{A}_{\mathbf{L}}$ be the Lindenbaum--Tarski algebra of $\mathbf{L}$.  We will abuse notation and write $\phi$ instead $[\phi]$ (i.e.\ formulas instead of their equivalence classes) for the elements of  the Lindenbaum--Tarski algebra $\mathbb{A}_{\mathbf{L}}$. Define the {\em canonical graph-based model} to be 
$\mathbb{M}_{\mathbf{L}}=(\mathbb{F}_{\mathbb{A}_\mathbf{L}}, V)$ where $V(p)=(\{\,z\in Z \mid p \in F_z\}, \{\, z \in Z \mid p \in J_z \,\})$. 
By Proposition \ref{prop:alg-to-frame}, $\mathbb{F}_{\mathbb{A}_\mathbf{L}}$ is a graph-based $\mathcal{L}$-frame. That $V$ is well defined can be shown as follows:
\begin{center}
\begin{tabular}{r c l}
$(\{z \in Z \mid p \in F_z\})^{[1]}$ &  =& $ \{z \in Z \mid \forall z'(p \in F_{z}\Rightarrow (z, z')\notin E)\}$\\
&  = & $\{z \in Z \mid \forall z'( p \in F_{z}\Rightarrow  F_z\cap J_{z'}\neq \emptyset)\} $\\
& =& $\{z \in Z \mid p \in  J_z \}$
\end{tabular}
\end{center}
\begin{lemma}\label{lemma:truthlemma}
Let $\phi \in \mathcal{L}$. Then 
\begin{enumerate}
\item $\mathbb{M}_{\mathbf{L}}, z \Vdash \phi$ \, iff \, $\phi \in F_z$
\item $\mathbb{M}_{\mathbf{L}}, z \succ  \phi$ \, iff \, $\phi \in J_z$
\end{enumerate} 
\end{lemma}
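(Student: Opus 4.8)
The plan is to prove both statements simultaneously by induction on the structure of $\phi$, since the satisfaction and co-satisfaction relations are defined by simultaneous recursion and the two clauses are intertwined (e.g.\ $\Vdash \Box\psi$ refers to $\succ\psi$, and $\succ p$ refers to $\Vdash p$). The base cases $\phi = \top$ and $\phi = \bot$ are immediate from the clauses and the fact that filters and ideals of $\mathbb{A}_{\mathbf{L}}$ are proper (so $\top\notin J_z$, $\bot\notin F_z$, and $\top\in F_z$, $\bot\in J_z$ always). The case $\phi = p$: the $\Vdash$ direction is the definition of $V(p)$, while the $\succ$ direction unwinds $\mathbb{M}_{\mathbf{L}}, z\succ p$ iff $\forall z'(z'Ez \Rightarrow z'\not\Vdash p)$, iff (by the $\Vdash$-base case) $\forall z'(z'Ez\Rightarrow p\notin F_{z'})$; then using the definition of $E$ on $\mathbb{X}_{\mathbb{L}}$ and an argument analogous to the well-definedness computation for $V$ already carried out in the excerpt, this is equivalent to $p\in J_z$. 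The key fact to invoke repeatedly is that $z E z'$ iff $F_z\cap J_{z'}=\emptyset$, so $z'\not E z$ iff $F_{z'}\cap J_z\neq\emptyset$.

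For the propositional connectives I would handle $\wedge$ and $\vee$ dually. Take $\phi = \psi_1\wedge\psi_2$. The $\Vdash$ clause gives $\mathbb{M}_{\mathbf{L}},z\Vdash\psi_1\wedge\psi_2$ iff $z\Vdash\psi_1$ and $z\Vdash\psi_2$, iff (induction) $\psi_1\in F_z$ and $\psi_2\in F_z$, iff $\psi_1\wedge\psi_2\in F_z$ because $F_z$ is a filter and $\wedge$ in $\mathbb{A}_{\mathbf{L}}$ is the syntactic meet (this uses the axioms $p\wedge q\vdash p$, $p\wedge q\vdash q$ and the rule for introducing $\wedge$ on the right). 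For the $\succ$ clause, $z\succ\psi_1\wedge\psi_2$ iff $\forall z'(z'Ez\Rightarrow z'\not\Vdash\psi_1\wedge\psi_2)$, iff (by the $\Vdash$ case just proved) $\forall z'(z'Ez\Rightarrow \psi_1\wedge\psi_2\notin F_{z'})$; I must show this equals $\psi_1\wedge\psi_2\in J_z$. One direction: if $\psi_1\wedge\psi_2\in J_z$ and $z'Ez$, i.e.\ $F_{z'}\cap J_z=\emptyset$, then $\psi_1\wedge\psi_2\notin F_{z'}$. Conversely, if $\psi_1\wedge\psi_2\notin J_z$, I need a state $z' = (F_{z'}, J_{z'})$ with $F_{z'}\cap J_z=\emptyset$ and $\psi_1\wedge\psi_2\in F_{z'}$; take $F_{z'} = {\uparrow}(\psi_1\wedge\psi_2)$ and $J_{z'} = \{0\}$ — wait, more carefully, one takes a filter containing $\psi_1\wedge\psi_2$ disjoint from $J_z$, which exists by a standard filter-ideal separation argument since $\psi_1\wedge\psi_2\notin J_z$ means $\{\psi_1\wedge\psi_2\}$ generates a filter disjoint from $J_z$ (because $J_z$ is downward closed and $\psi_1\wedge\psi_2\notin J_z$, the principal filter ${\uparrow}(\psi_1\wedge\psi_2)$ meets $J_z$ only if some element above $\psi_1\wedge\psi_2$ is in $J_z$, which would force $\psi_1\wedge\psi_2\in J_z$ — so actually ${\uparrow}(\psi_1\wedge\psi_2)\cap J_z = \emptyset$ directly). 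The case $\vee$ is order-dual, swapping the roles of filters/ideals and $\Vdash/\succ$.

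For the modal cases, take $\phi = \Box\psi$. The clause gives $\mathbb{M}_{\mathbf{L}},z\Vdash\Box\psi$ iff $\forall z'(zR_{\Box}z'\Rightarrow z'\not\succ\psi)$, iff (induction on $\psi$) $\forall z'(zR_{\Box}z'\Rightarrow \psi\notin J_{z'})$. By the definition of $R_{\Box}$ on $\mathbb{F}_{\mathbb{A}_{\mathbf{L}}}$, $zR_{\Box}z'$ iff $F_z\cap\Box J_{z'}=\emptyset$. So I must show $\forall z'(F_z\cap\Box J_{z'}=\emptyset\Rightarrow \psi\notin J_{z'})$ is equivalent to $\Box\psi\in F_z$. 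For the contrapositive-style direction: if $\Box\psi\notin F_z$ I build $z'$ with $\psi\in J_{z'}$ and $F_z\cap\Box J_{z'}=\emptyset$ — take $J_{z'} = {\downarrow}\psi$, then $\Box J_{z'} = \Box({\downarrow}\psi)$, whose generated ideal is $\lceil\Box J_{z'}\rceil = {\downarrow}\Box\psi$ (using monotonicity of $\Box$ and that $\Box$ preserves the relevant joins up to the ideal), and $F_z\cap\lceil\Box J_{z'}\rceil=\emptyset$ iff $\Box\psi\notin F_z$, which by Lemma~\ref{lemma:filtidl:gen:intersect}(1) gives $F_z\cap\Box J_{z'}=\emptyset$; I also need $F_z$ to be a proper filter, and need to be slightly careful to choose an actual state of $\mathbb{X}_{\mathbb{L}}$, i.e.\ $F_z\cap J_{z'}=\emptyset$ — but that is not required, $z'$ only needs $F_{z'}\cap J_{z'}=\emptyset$; one may need to enlarge $J_{z'}$ and is again a separation argument. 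Conversely if $\Box\psi\in F_z$ and $zR_{\Box}z'$, then $F_z\cap\Box J_{z'}=\emptyset$ forces $\Box\psi\notin\Box J_{z'}$, hence (since $\psi\in J_{z'}$ would give $\Box\psi\in\Box J_{z'}$) $\psi\notin J_{z'}$. The dual part of this case — namely $\mathbb{M}_{\mathbf{L}},z\succ\Box\psi$ iff $\Box\psi\in J_z$ — then follows from the $\Vdash$ part just established together with the same $E$-based separation argument used in the $\wedge$ case. The $\Diamond$ case is order-dual, using Lemma~\ref{lemma:filtidl:gen:intersect}(2) and the definition of $R_{\Diamond}$.

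The main obstacle is the direction of each inductive step that requires \emph{constructing} a witnessing state of $\mathbb{X}_{\mathbb{L}}$ — a filter–ideal pair with empty intersection — certifying that a formula is \emph{not} in $F_z$ (resp.\ not in $J_z$). This is where the filter/ideal prime-free separation (the fact that for $a\notin J$ with $J$ an ideal there is a filter $F\ni a$ with $F\cap J=\emptyset$, and dually) is used, combined crucially with Lemma~\ref{lemma:filtidl:gen:intersect} to pass between $\Box J$ and the ideal $\lceil\Box J\rceil$ it generates (and dually for $\Diamond$ and $\lfloor\Diamond F\rfloor$). Everything else is bookkeeping: unwinding the recursive clauses, invoking $zEz' \iff F_z\cap J_{z'}=\emptyset$, and appealing to the induction hypothesis.
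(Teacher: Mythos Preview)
Your proof is correct and follows essentially the same inductive strategy as the paper's: unfold the semantic clauses via the definitions of $E$, $R_\Box$, $R_\Diamond$ and construct witness states from principal filters/ideals (the paper uses e.g.\ $z' = (\lfloor\top\rfloor, \lceil\psi\rceil)$ in the $\Box\psi$ case), invoking Lemma~\ref{lemma:filtidl:gen:intersect} for the modal steps. The only organisational difference is that the paper first disposes of the degenerate cases $\top\vdash\phi$ and $\phi\vdash\bot$ separately via soundness, whereas in your approach these are absorbed by the contrapositive hypotheses (e.g.\ $\Box\psi\notin F_z$ already forces $\psi\neq\top$, so pairing your $J_{z'}={\downarrow}\psi$ with $F_{z'}=\lfloor\top\rfloor$ yields a valid state directly---no further ``enlarging'' or separation argument is needed).
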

\begin{proof}

Let us show item 1 under the additional assumption that  $\phi$ is  a theorem of $\mathbf{L}$ (i.e.~$\mathbf{L}$ derives $\top \vdash \phi$). Then $\phi$ belongs to every filter, hence to show the required equivalence, we need to show that $\val{\phi}_{\mathbb{M}_{\mathbf{L}}} = Z$.  If $\mathbf{L}$ derives $\top \vdash \phi$, then, by soundness, $\mathbb{M}_{\mathbf{L}}\models \top\vdash \phi$. Then  for every state $z$ in $\mathbb{M}_{\mathbf{L}}$, we have $\mathbb{M}_{\mathbf{L}}, z\not \succ \phi$. Indeed, suppose for contradiction that $\mathbb{M}_{\mathbf{L}}, z \succ \phi$ for some state $z$. Since $\mathbb{M}_{\mathbf{L}}, z \Vdash \top$, then by spelling out  the definition of satisfaction of a sequent in a model in the instance $\mathbb{M}_{\mathbf{L}}\models \top\vdash \phi$, we would conclude that $(z, z)\notin E$, i.e.~$E$ is not reflexive, which contradicts the fact that $E$ is reflexive by construction. This finishes the proof that if $\mathbf{L}$ derives $\top \vdash \phi$, then $\descr{\phi}_{\mathbb{M}_{\mathbf{L}}} = \emptyset$. Hence, $\val{\phi}_{\mathbb{M}_{\mathbf{L}}} = (\descr{\phi}_{\mathbb{M}_{\mathbf{L}}})^{[1]} = \emptyset^{[1]} = Z$, as required.

Likewise, one can show item 2 of the lemma under the additional assumption that  $\mathbf{L}$ derives $\phi\vdash \bot$.

Now, assuming that $\mathbf{L}$ derives neither $\top \vdash \phi$ nor $\phi \vdash \bot$, we proceed by induction on $\phi$.  The base cases are straightforward. Consider $\phi=\alpha \vee \beta$. Now 
\begin{center}
\begin{tabular}{rcll}
$\mathbb{M}_{\mathbf{L}}, z \Vdash \alpha \vee \beta \quad$  & iff & $\forall z' \in Z [zEz' \Rightarrow \mathbb{M}_{\mathbf{L}}, z' \not\succ \alpha \vee \beta]$ \\
& iff & $\forall z' \in Z [zEz' \Rightarrow (\mathbb{M}_{\mathbf{L}}, z' \not\succ \alpha \text{ or } \mathbb{M}_{\mathbf{L}}, z' \not\succ \beta )]$ \\
& iff & $\forall z' \in Z [F_z \cap J_{z'} = \emptyset \Rightarrow (\alpha \notin J_{z'} \text{ or } \beta\notin J_{z'})]$ & inductive hypothesis\\
& iff & $\forall z' \in Z [F_z \cap J_{z'}  \neq \emptyset \text{ or } (\alpha \notin J_{z'} \text{ or } \beta\notin J_{z'})].$
\end{tabular}
\end{center}
Consider $z' \in Z$ defined by $z'=(\lfloor \top \rfloor, \lceil(\alpha \vee \beta)\rceil)$, where $\lfloor \top \rfloor$ and $\lceil(\alpha \vee \beta)\rceil$ denote, respectively, the filter generated by $\top$ and the ideal generated by $\alpha \vee \beta$. The state $z'$ is indeed  well-defined since by assumption $(\alpha\vee\beta)\notin \lfloor \top \rfloor$. Moreover, since $\top \not\vdash \alpha \vee \beta$, this filter and ideal are disjoint.  Clearly $\alpha \in J_{z'}$ and $\beta \in J_{z'}$ so we must have $F_z \cap \lceil(\alpha \vee \beta)\rceil \neq \emptyset$ so $\alpha \vee \beta \in F_z$. 
Conversely, suppose $\alpha \vee \beta \in F_z$ and consider $z' \in Z$ with $zEz'$. Then 
$F_z \cap J_{z'}=\emptyset$ so $\alpha \vee \beta \notin J_{z'}$ and since this is a down-set we 
have $\alpha \notin J_{z'}$ and $\beta \notin J_{z'}$ and by the inductive hypothesis we have 
$\mathbb{M}_{\mathbf{L}}, z' \not\succ \alpha$ and $\mathbb{M}_{\mathbf{L}}, z' \not\succ \beta$. 

The proof that $\mathbb{M}_{\mathbf{L}}, z \succ \alpha \vee \beta$ iff $\alpha \vee \beta \in J_z$ follows easily 
from the fact that $J_z$ is an ideal. The proof of $\phi=\alpha \wedge \beta$ is similar to $\phi=\alpha \vee \beta$ 
but with the role of $\Vdash$ and $\succ$ interchanged.

Now consider $\phi = \Box\psi$ and assume that $\mathbb{M}_{\mathbf{L}}, z \Vdash \Box \psi$. We have
\begin{center}
\begin{tabular}{rcll}
$\mathbb{M}_{\mathbf{L}}, z \Vdash \Box\psi$ 
&  iff & $\forall z' \in Z [zR_{\Box}z' \Rightarrow \mathbb{M}_{\mathbf{L}}, z' \not\succ \psi]$\\
&  iff & $\forall z' \in Z [zR_{\Box}z' \Rightarrow \psi \notin J_{z'}]$ & inductive hypothesis\\
&  iff & $\forall z' \in Z [F_z \cap \Box J_{z'} = \emptyset \Rightarrow \psi \notin J_{z'}]$ \\
&  iff & $\forall z' \in Z [\psi \in J_{z'} \Rightarrow F_z \cap \Box J_{z'} \neq  \emptyset]$ \\
\end{tabular}
\end{center}
Consider $z'=(\lfloor \top \rfloor, \lceil\psi\rceil)$. Clearly $\psi \in J_{z'}$ so there exists $\alpha \in F_z \cap \Box J_{z'}$. Now $\alpha = \Box \beta$ for some $\beta \leq \psi$ (in the lattice order of $\mathbb{A}_{\mathbf{L}}$), i.e. $\beta \vdash \psi$ and therefore $\Box \beta \vdash \Box \psi$, whence $\Box \psi \in F_z$. For the converse, if $\Box \psi \in F_z$ then clearly the statement $\forall z' \in Z [F_z \cap \Box J_{z'} = \emptyset \Rightarrow \psi \notin J_{z'}]$ is true and so $\mathbb{M}_{\mathbf{L}},z \Vdash \Box \psi$. Now
\begin{center}
\begin{tabular}{rcll}
$\mathbb{M}_{\mathbf{L}}, z \succ \Box \psi$  & iff & $\forall \z' \in Z [\z' E\z  \Rightarrow \mathbb{M}_{\mathbf{L}},\z' \not\Vdash \Box \psi]$\\
& iff & $\forall \z' \in Z [\z' E\z  \Rightarrow \Box \psi \notin F_{z'}]$ & from above\\
& iff & $\forall \z' \in Z [F_{z'} \cap J_{z} = \emptyset\Rightarrow \Box \psi \notin F_{z'} ]$\\
& iff & $\forall \z' \in Z [\Box \psi \in F_{z'} \Rightarrow  F_{z'} \cap J_{z} \neq \emptyset]$\\
& iff & $\Box \psi \in J_z$. 
\end{tabular}
\end{center}
The forward implication of the last equivalence follows by taking $z'=(\lfloor\Box \psi\rfloor, \lceil\bot \rceil)$.

The case of $\phi=\Diamond \psi$ follows using a similar proof to that of $\phi=\Box \psi$ except starting by first showing
$\mathbb{M}_{\mathbf{L}},z\succ \Diamond \psi$ iff $\Diamond \psi \in J_z$.  
\end{proof}

\begin{theorem}
	The basic non-distributive modal logic $\mathbf{L}$ is  complete w.r.t.~the class of graph-based $\mathcal{L}$-frames.
\end{theorem}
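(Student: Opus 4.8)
The plan is to run a standard canonical-model argument, reducing everything to the Truth Lemma (Lemma~\ref{lemma:truthlemma}) just established. I would argue by contraposition: assume that the $\mathcal{L}$-sequent $\phi \vdash \psi$ is \emph{not} derivable in $\mathbf{L}$, and exhibit a graph-based $\mathcal{L}$-frame on which it fails. The natural candidate is the frame $\mathbb{F}_{\mathbb{A}_\mathbf{L}}$ underlying the canonical model $\mathbb{M}_{\mathbf{L}}$, which is a graph-based $\mathcal{L}$-frame by Proposition~\ref{prop:alg-to-frame}; since $\mathbb{M}_{\mathbf{L}}$ is a model based on that frame, it suffices to show $\mathbb{M}_{\mathbf{L}} \not\models \phi \vdash \psi$.

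First I would pass to the Lindenbaum--Tarski algebra $\mathbb{A}_\mathbf{L}$: since $\mathbf{L}$ does not derive $\phi \vdash \psi$, we have $\phi \not\leq \psi$ in the lattice order of $\mathbb{A}_\mathbf{L}$ (identifying formulas with their equivalence classes, as in the text). Next I would build a single witnessing state. Let $z := (\lfloor \phi \rfloor, \lceil \psi \rceil)$, i.e.\ the principal filter generated by $\phi$ paired with the principal ideal generated by $\psi$. The disjointness $\lfloor \phi \rfloor \cap \lceil \psi \rceil = \emptyset$ is immediate from $\phi \not\leq \psi$, since any common element would lie between $\phi$ and $\psi$; in particular both components are proper, so $z$ is a genuine state of $\mathbb{X}_{\mathbb{A}_\mathbf{L}}$, i.e.\ $z \in Z$. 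By construction $\phi \in F_z$ and $\psi \in J_z$, so the Truth Lemma yields $\mathbb{M}_{\mathbf{L}}, z \Vdash \phi$ and $\mathbb{M}_{\mathbf{L}}, z \succ \psi$.

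Finally I would invoke reflexivity of $E$: since $(z,z) \in E$, it is \emph{not} the case that $z\, E^c\, z$, so the pair $(z,z)$ violates the defining condition for $\phi \vdash \psi$ to be true in $\mathbb{M}_{\mathbf{L}}$. Hence $\mathbb{M}_{\mathbf{L}} \not\models \phi \vdash \psi$, and therefore $\mathbb{F}_{\mathbb{A}_\mathbf{L}} \not\models \phi \vdash \psi$, which completes the contrapositive and the proof. (Alternatively, the last step could be routed through Lemma~\ref{lemma:for:completeness} together with the algebraic completeness of $\mathbf{L}$, but the direct refutation in the canonical model is shorter.)

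As for the main obstacle: essentially none remains at this stage — the real work has already been done in proving the Truth Lemma and Proposition~\ref{prop:alg-to-frame}. The only points requiring a moment's care are the verification that $z = (\lfloor \phi \rfloor, \lceil \psi \rceil)$ is a legitimate state (disjointness, hence properness of both components) and the observation that one and the same state serves simultaneously as the $\Vdash$-witness and the $\succ$-witness, so that \emph{reflexivity} of $E$ — rather than any stronger property of $E$ — is precisely what is needed to refute the sequent.
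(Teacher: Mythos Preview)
Your proof is correct and follows essentially the same route as the paper: construct the state $z=(\lfloor\phi\rfloor,\lceil\psi\rceil)$ in the canonical model, apply the Truth Lemma, and use reflexivity of $E$ to refute the sequent at $(z,z)$. Your alternative via Lemma~\ref{lemma:for:completeness} and algebraic completeness is also noted in the paper (in the remark following the proof).
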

\begin{proof} 
Consider an $\mathcal{L}$-sequent $\phi \vdash \psi$ that is not derivable in $\mathbf{L}$. Then $\lfloor \phi\rfloor\cap \lceil \psi\rceil  = \emptyset$ in the Lindenbaum-Tarski algebra. Let $z:= (\lfloor \phi\rfloor, \lceil \psi\rceil)$ be the corresponding state in $\mathbb{M}_{\mathbf{L}}$
 By Lemma~\ref{lemma:truthlemma} we have
$\mathbb{M},z\Vdash \phi$ and $\mathbb{M}, z\succ \psi$, but $zEz$. Hence $\mathbb{M} \not\models \phi \vdash \psi$.  
\end{proof}
\begin{remark}
The proof via canonical model given above is of course constructive; defining the canonical model as we do by taking disjoint filter-ideal pairs (rather than e.g.~maximally disjoint filter-ideal pairs) does not require any of the equivalents of Zorn's lemma.

Completeness can also be argued via canonical extension in the following way which does not make use of the truth lemma.  Firstly, we observe that Proposition \ref{Prop:Xplus:canonicalextension} can be readily extended to the statement that for any $\mathcal{L}$-algebra $\mathbb{A} = (\mathbb{L},  \Box, \Diamond)$, the complex algebra of its associated graph-based structure $\mathbb{F_A}$ is the canonical extension $\mathbb{A}^\delta$. Secondly,  we observe that  any graph-based structure validates exactly the sequents valid on its complex algebra (cf.~Lemma \ref{lemma:for:completeness}).

Hence, if the $\mathcal{L}$-sequent $\phi \vdash \psi$ is not derivable in $\mathbf{L}$, then by algebraic completeness, $\phi \vdash \psi$ is not valid in the Lindenbaum--Tarski algebra; then $\phi \vdash \psi$ is not valid in the canonical extension of the Lindenbaum--Tarski algebra, which, as discussed above, is the complex algebra of the canonical model; then (Lemma \ref{lemma:for:completeness}) $\phi \vdash \psi$ is not satisfied in the canonical model.
\end{remark}

\section{Sahlqvist correspondence on graph-based frames}
\label{sec:correspondence}
\paragraph{Parametric notions.} We find it useful to phrase the correspondence results of the present section in terms of a number of notions, parametric in $E$, which generalize familiar notions about sets and relations which are staples of correspondence theory in Kripke frames. 
The following definition will make it possible to concisely express relevant first order conditions. Properties of this definition are collected in  Section \ref{sec:appendix:properties of E-comp}.

\begin{definition}
\label{def:E-composition of relations}
For any graph $\mathbb{X} = (Z,E)$ and relations $R, S \subseteq Z \times Z$, the  $E$-{\em compositions} of $R$ and $S$ are the relations $R \circ_{E} S\subseteq Z\times Z$ and $R \bullet_{E} S\subseteq Z\times Z$ defined as follows: for any $a, x\in Z$,
	\[
	x (R \circ_{E} S) a \quad \text{iff} \quad \exists b (x R b \; \& \; E^{(1)}[b] \subseteq S^{(0)}[a]).
	\]
	\[
	a (R \bullet_{E} S) x \quad \text{iff} \quad \exists y(aR y \; \&\; E^{(0)}[ y]\subseteq  S^{(0)} [x]).
	\]

\end{definition}
If $E = \Delta$, then $E^{(1)}[b] = E^{(0)}[b] =  \{b\}$ for every $b\in Z$, and hence  $(R \circ_{E} S)$ and $(R \bullet_{E} S)$ reduce both to the usual relational composition of $R$ and $S$. The interpretation of  $E$-compositions will be discussed in Section \ref{sec: interpretation}, while a number of their key properties are proven in Appendix \ref{sec:appendix:properties of E-comp}.

\begin{definition}
\label{def:terminology}
For any graph $\mathbb{X} = (Z, E)$,  the relation $R\subseteq Z\times Z$ is:\\
\begin{tabular}{lllllllllllllllll}
$E$-{\em reflexive} & iff  & $E\subseteq R$; & $\, $& {\em sub}-$E$ &iff & $R\subseteq E$; & $\, $&$E_\circ$-{\em transitive} & iff & $R\circ_E R\subseteq R$; & $\, $&$E_\bullet$-{\em transitive} & iff & $R\bullet_E R\subseteq R$. 
\end{tabular}
\end{definition}
When $E: = \Delta$, we obtain the usual reflexivity, transitivity etc. 


\begin{proposition}
\label{lemma:correspondences}
For any graph-based $\mathcal{L}$-frame $\mathbb{F} = (\mathbb{X}, R_\Box, R_\Diamond)$,
{\small{
\begin{enumerate}
\item $\mathbb{F}\models \Box\phi\vdash \phi\quad $ iff $\quad E\subseteq R_\Box\quad$ ($R_\Box$ is $E$-reflexive).
\item $\mathbb{F}\models \phi\vdash \Diamond\phi\quad $ iff $\quad E\subseteq R_\blacksquare\quad $ ($R_\Diamond$ is $E$-reflexive).
\item $\mathbb{F}\models \Box\phi\vdash \Box\Box\phi\quad $ iff $\quad R_{\Box}\bullet_E R_{\Box}\subseteq R_{\Box}\quad$ ($R_\Box$ is $E_\bullet$-transitive).
\item $\mathbb{F}\models \Diamond\Diamond\phi\vdash \Diamond\phi\quad $ iff $\quad R_{\Diamond}\circ_E R_{\Diamond}\subseteq R_{\Diamond}\quad$($R_\Diamond$ is $E_\circ$-transitive). 
\item $\mathbb{F}\models \phi\vdash \Box \phi\quad $ iff $\quad R_\Box\subseteq E\quad$ ($R_\Box$ is sub-$E$).
\item $\mathbb{F}\models \Diamond\phi\vdash \phi\quad $ iff $\quad R_\blacksquare\subseteq E\quad$ ($R_\Diamond$ is sub-$E$)
\end{enumerate}
}}
\end{proposition}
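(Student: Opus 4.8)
The plan is to prove all six items by the same two-step strategy: first transfer the semantic validity to a statement about the complex algebra $\mathbb{F}^+$ via Lemma~\ref{lemma:for:completeness}, and then unfold the algebraic inequality into a first-order condition on the relations $R_\Box$ and $R_\Diamond$ using the definitions of $[R_\Box]$, $\langle R_\Diamond\rangle$ and their adjoints $\langle R_\Diamondblack\rangle$, $[R_\blacksquare]$ together with the basic properties of $(\cdot)^{[0]}$, $(\cdot)^{[1]}$ collected in Lemma~\ref{lemma: basic:square:brackets} and the abbreviations of Footnote~\ref{footnote: abbreviations}. Since $\mathbb{F}^+$ is a complete lattice in which every concept $c$ is a join of concepts of the form $b^{[10]}$-generated principal elements and a meet of $y^{[0]}$-type elements, each algebraic inequality need only be checked on these generators; this is what lets the quantification over all formulas/concepts collapse to quantification over single states $b, y \in Z$, matching the right-hand sides of the six biconditionals.

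Concretely, for item~1 one shows $\mathbb{F}^+\models [R_\Box]c \le c$ for all $c$ iff $E \subseteq R_\Box$. The $(\Leftarrow)$ direction: if $E\subseteq R_\Box$ then $R_\Box^{[0]}[Y] \subseteq E^{[0]}[Y] = Y^{[0]}$ by monotonicity (Lemma~\ref{lemma: basic:square:brackets}.1 applied to the inclusion of relations, or directly from the definition \eqref{eq:def:square brackets} since a larger relation gives a smaller complement), so taking $Y = \descr{c}$ gives $\val{[R_\Box]c} = R_\Box^{[0]}[\descr{c}] \subseteq \descr{c}^{[0]} = \val{c}$, i.e.\ $[R_\Box]c\le c$. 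The $(\Rightarrow)$ direction: test the inequality on the concept $c$ generated by a single co-point, i.e.\ $\descr{c} = y^{[01]}$, $\val{c} = y^{[0]}$; from $R_\Box^{[0]}[y^{[01]}] \subseteq y^{[0]}$ and $R_\Box^{[0]}[y^{[01]}] = R_\Box^{[0]}[y]$ (using $E$-compatibility of $R_\Box$ via Lemma~\ref{equivalents of I-compatible}.2(iii), or just Lemma~\ref{lemma: basic:square:brackets}) one extracts $R_\Box^{[0]}[y]\subseteq y^{[0]}$, which unwound via Lemma~\ref{lemma: basic:square:brackets}.2 is exactly $E \subseteq R_\Box$ by letting $y$ range over $Z$. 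Item~5 is dual, using that $\phi\vdash\Box\phi$ forces $c \le [R_\Box]c$, hence $\val{c}\subseteq R_\Box^{[0]}[\descr{c}]$; testing on $y$-generated concepts gives $y^{[0]}\subseteq R_\Box^{[0]}[y]$, i.e.\ $R_\Box\subseteq E$. Items~2 and~6 are the $\langle R_\Diamond\rangle$ / $R_\blacksquare$ mirror images, obtained by the same argument after passing through the adjunction (recall $\langle R_\Diamond\rangle \dashv [R_\blacksquare]$ and the identities \eqref{eq:zero is 1 is zero}), which is why the frame conditions are naturally phrased in terms of $R_\blacksquare$ rather than $R_\Diamond$.

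For items~3 and~4 the same scheme applies but the unfolding is heavier. For item~3, $\mathbb{F}^+\models \Box\phi\vdash\Box\Box\phi$ means $[R_\Box]c \le [R_\Box][R_\Box]c$ for all $c$; spelling out $\val{[R_\Box][R_\Box]c} = R_\Box^{[0]}[(R_\Box^{[0]}[\descr{c}])^{[1]}]$ and comparing with $R_\Box^{[0]}[\descr{c}]$, one recognizes after simplification (again reducing to $\descr{c} = y$ for a single state and using $E$-compatibility) the condition $R_\Box^{[0]}[y] \subseteq R_\Box^{[0]}[(R_\Box^{[0]}[y])^{[1]}]$, which is precisely the unfolding of $R_\Box\bullet_E R_\Box\subseteq R_\Box$ once one plugs in Definition~\ref{def:E-composition of relations} and translates the round-bracket notation there into square-bracket notation via Lemma~\ref{lemma:round and square brackets}. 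Item~4 is the analogous computation for $\langle R_\Diamond\rangle$ using $E$-composition $\circ_E$. The main obstacle I anticipate is bookkeeping: getting the generator reduction exactly right (which single-state test concept to use for each direction) and, especially for items~3--4, carefully matching the nested square-bracket expression coming from iterating $[R_\Box]$ against the definition of $R_\Box\bullet_E R_\Box$ — this requires the $E$-compatibility conditions of Definition~\ref{def:graph:based:frame:and:model} and the identities of Lemma~\ref{equivalents of I-compatible} to absorb the spurious $(\cdot)^{[10]}$/$(\cdot)^{[01]}$ closures, and it is easy to misplace a complement. Everything else is routine manipulation of the Galois connection.
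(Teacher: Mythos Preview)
Your proposal is correct and follows essentially the same route as the paper. The only cosmetic difference is that the paper packages the reduction to generators as an application of the ALBA algorithm (first approximation, Ackermann's lemma, then the observation that $J$ join-generates and $M$ meet-generates $\mathbb{F}^+$), obtaining each item as a single chain of equivalences, whereas you split the argument into a separate $(\Leftarrow)$ direction by monotonicity and a $(\Rightarrow)$ direction by testing on the single-state generated concepts; for items~3--4 the paper cites the pre-packaged identity $(R\bullet_E T)^{[0]}[x]=R^{[0]}[E^{[1]}[T^{[0]}[x]]]$ (Lemma~\ref{lemma:comp4}) rather than unfolding Definition~\ref{def:E-composition of relations} by hand, but that is exactly the computation you describe.
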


\begin{proof}
The modal principles above are all Sahlqvist (cf.~\cite[Definition 3.5]{CoPa:non-dist}). Hence, they all have first-order correspondents, {\em both} on Kripke frames {\em and} on graph-based $\mathcal{L}$-frames, which can be computed e.g.~via the algorithm ALBA (cf.~\cite[Section 4]{CoPa:non-dist}). Below, we do so 
for the modal axiom in item 1 (for the remaining items, see Appendix \ref{sec:appendixproof of sahlqvist corrs}). In what follows, the variables $j$ are interpreted as elements of the set $J := \{(a^{[10]}, a^{[1]})\mid a\in Z\}$ which completely join-generates $\mathbb{F}^{+}$, and the variables $m$ as elements of $M: = \{(x^{[0]}, x^{[01]}) \mid x\in Z\}$ which completely meet-generates $\mathbb{F}^{+}$.  
{\small{
\begin{center}
	\begin{tabular}{r l l l}
		&$\forall p$  [$\Box p \leq p $]\\
		iff& $\forall p \forall j \forall m  [(j\le \Box p \ \&\  p\le m )\Rightarrow j\le m]$
		& first approximation\\
		iff & $ \forall j \forall m  [ j\le \Box m \Rightarrow j\le m]$
		&Ackermann's Lemma \\
		iff& $ \forall m  [\Box m\le m]$
		&$J$ completely join-generates $\mathbb{F}^{+}$
	\end{tabular}
\end{center}
		}}%
		Translating the universally quantified algebraic inequality above into its concrete representation in $\mathbb{F}^+$ requires using  the interpretation of $m$ as ranging in $M$ and the definition of $[R_{\Box}]$ and $[R_\blacksquare]$, as follows:
		{\small{
		\begin{center}
			\begin{tabular}{r l l l}
				&$\forall x\in Z$~~~~ $R_{\square}^{[0]}[x^{[01]}]\subseteq E^{[0]}[x]$& translation\\
				iff &$\forall x\in Z$~~~~ $R_{\square}^{[0]}[x]\subseteq E^{[0]}[x]$& Lemma \ref{equivalents of I-compatible} since $R_{\square}$ is $E$-compatible\\
				iff& $R^c_{\square}\subseteq E^c$& \eqref{eq:def:square brackets}\\
				iff& $E\subseteq R_{\square}$.& 
			\end{tabular}
		\end{center} }}
\end{proof}

\section{Graph-based frames as models of informational entropy}
\label{sec: interpretation}
As shown in the previous sections, graph-based frames -- such as those defined for the language $\mathcal{L}$ -- provide a mathematically grounded semantic environment for lattice-based logics such as $\mathbf{L}$. However, in order for this environment to `make sense' in a more fundamental way, we need to: (a) specify how it generalizes the Kripke semantics of classical normal modal logic; (b) couple it with an extra-mathematical interpretation which simultaneously accounts for the meaning of {\em all} connectives, and coherently extends to the meaning of axioms and of their first order correspondents.  Below, we propose a way to address these issues.

By assumption, the graphs $\mathbb{X} = (Z, E)$ on which the semantics of $\mathbf{L}$ is based are reflexive, i.e.~$\Delta\subseteq E$. Hence, a good starting point to address (a) is to understand this semantics when $E = \Delta$. In this case, the polarity arising from $\mathbb{X}$ is $\mathbb{P_X} = (Z_A, Z_X, I_{\Delta^c})$, and, as is well known and easy to see (cf.~\cite[Proposition 1]{roughconcepts}), the complete lattice $\mathbb{X}^+$ arising from $\mathbb{X}$ is (isomorphic to) the powerset algebra $\mathcal{P}(Z)$, and can be represented as a concept lattice the join-generators of which are $(a^{[10]}, a^{[1]}) = (\{a\}, \{a\}^c)$ for every $a\in Z$, and the meet generators of which are $(x^{[0]}, x^{[01]}) = (\{x\}^c, \{x\})$ for every $x\in Z$. Notice also that if $E: = \Delta$, then $B^{\uparrow} = B^c$ and $Y^{\downarrow} = Y^c$ for all $B, Y\subseteq Z$. Hence,  the  interpretation of $\mathcal{L}$-formulas on frames based on $\mathbb{X} = (Z, \Delta)$ reduces as shown below. These computations show that indeed, when $E: = \Delta$, we recover the usual Kripke-style interpretation of the logical connectives, both propositional and modal.

{\small{
\begin{center}
	\begin{tabular}{r c l c l l}
		$V(p)$ & $ = $ & $(\val{p}, \descr{p})$& $ = $ & $(\val{p}, \val{p}^c)$\\
		$V(\top)$ & $ = $ & $(Z, Z^{[1]})$ & $ = $ & $(Z, Z^c)$ \\
		$V(\bot)$ & $ = $ & $(Z^{[0]}, Z)$& $ = $ & $(Z^c, Z)$\\
		$V(\phi\wedge\psi)$ & $ = $ & $(\val{\phi}\cap \val{\psi}, (\val{\phi}\cap \val{\psi})^{[1]})$& $ = $ & $(\val{\phi}\cap \val{\psi}, (\val{\phi}\cap \val{\psi})^c)$\\
		$V(\phi\vee\psi)$ & $ = $ & $((\descr{\phi}\cap \descr{\psi})^{[0]}, \descr{\phi}\cap \descr{\psi})$& $ = $ & $(\val{\phi}\cup \val{\psi}, (\val{\phi}\cup \val{\psi})^c)$\\
		$V(\Box\phi)$ & $ = $ & $(R_{\Box}^{[0]}[\descr{\phi}], (R_{\Box}^{[0]}[\descr{\phi}])^{[1]})$& $ = $ & $((R_{\Box}^{-1}[\val{\phi}^c])^c, R_{\Box}^{-1}[\val{\phi}^c])$ & ($\ast$)\\
		$V(\Diamond\phi)$ & $ = $ & $((R_{\Diamond}^{[0]}[\val{\phi}])^{[0]}, R_{\Diamond}^{[0]}[\val{\phi}])$ & $ = $ & $(R_{\Diamond}^{-1}[\val{\phi}], (R_{\Diamond}^{-1}[\val{\phi}])^c)$& ($\ast\ast$)\\
	\end{tabular}
\end{center}
}}
To justify the lines marked with ($\ast$) and ($\ast\ast$),
{\small{
\begin{center}
\begin{tabular}{r c l l c r c ll}
$R_{\Box}^{[0]}[\descr{\phi}]$& =&$(R_{\Box}^c)^{(0)}[\val{\phi}^c]$ &  & $\quad$ & $R_{\Diamond}^{[0]}[\val{\phi}]$ & =&$(R_{\Diamond}^c)^{(0)}[\val{\phi}]$\\
& =& $\{z\mid \forall y(y\notin\val{\phi}\Rightarrow zR_{\Box}^c y)\} $&  & $\quad$ && =&  $\{z\mid \forall y(y\in\val{\phi}\Rightarrow zR_{\Diamond}^c y)\} $& \\
& =& $\{z\mid \forall y( zR_{\Box} y \Rightarrow y\in\val{\phi})\} $&  & $\quad$ && =& $\{z\mid \forall y(zR_{\Diamond} y\Rightarrow y\in\val{\phi}^c)\} $&  \\
& =& $(\{z\mid \exists y( zR_{\Box} y \ \&\  y\in\val{\phi}^c)\})^c $&  & $\quad$ && =& $(\{z\mid \exists y(zR_{\Diamond} y\ \&\  y\in\val{\phi})\})^c $&   \\
& =& $(R_{\Box}^{-1}[\val{\phi}^c])^c $ &  & $\quad$ & & =& $(R_{\Diamond}^{-1}[\val{\phi}])^c $ & \\
\end{tabular}
\end{center}
}}
Earlier on, we observed that the $E$-composition of relations reduces to the usual relational composition when $E: = \Delta$, and so do the `$E$-versions' of relational properties such as reflexivity and transitivity (cf.~Definition \ref{def:terminology}).  So, in a slogan, the graph-based interpretation of the modal operators is {\em classical modulo a shift} from $\Delta$ to $E$. In what follows we focus on this shift.

Drawing from the literature in information science and modal logic, we can regard the vertices of $\mathbb{X} = (Z, E)$ as states, and interpret $zEy$ as `$z$ is {\em indiscernible} from $y$'. The reflexivity of $E$ is the minimal property we assume of such a relation, i.e.~that every state is indiscernible from itself.\footnote{In well-known settings (e.g.~\cite{pawlak,halpern}), indiscernibility is modelled as an equivalence relation. However, transitivity will fail, for example, when $z Ey$ iff $d(z, y)<\alpha$ for some distance function $d$. It has been argued in the psychological literature (cf.~\cite{tversky1977features,nosofsky1991stimulus}) that symmetry will fail in situations where indiscernibility is understood as similarity, defined e.g.~as  $z$ is similar to $y$ iff $z$ has all the features $y$ has.} The closure $a^{[10]}$ of any $a\in Z$ arises by first considering the set $a^{[1]}$ of all the states from which $a$ is not indiscernible, and then the set of all the states that can be told apart from every state in $a^{[1]}$. Then clearly, $a$ is an element of $a^{[10]}$, but this is as far as we can go: $a^{[10]}$ represents a \emph{horizon} to the possibility of completely `knowing' $a$. This horizon could be epistemic, cognitive, technological, or evidential. Hence,  $E: =\Delta$ represents the limit case in which $a^{[10]} = \{a\}$ for each state, i.e.~there are no bounds to the `knowability' of each state of $Z$. 

As we saw in Definition~\ref{def:graph:based:frame:and:model}, the elements of the complex algebra of a graph-based frame are tuples $(B, Y)$ such that $Y = B^{[1]}$ and $B = Y^{[0]}$. This two-sided representation yields a corresponding  interpretation of $\mathcal{L}$-formulas $\varphi$ as tuples $(\val{\varphi}, \descr{\varphi})$ which, as discussed above, reduce to $(\val{\varphi}, \val{\varphi}^c)$  when $E: = \Delta$. Hence, formulas $\varphi$ are assigned both a {\em satisfaction set} $\val{\varphi}$ and a {\em refutation set} $\descr{\varphi}$ which, as is the case when $E: = \Delta$, determine each other, i.e.~$\descr{\varphi} = \val{\varphi}^{[1]}$ and $\val{\varphi} = \descr{\varphi}^{[0]}$.  The latter identities imply that $\val{\varphi}^{[10]} = \val{\varphi}$ and $\descr{\varphi}^{[01]} = \descr{\varphi}$, i.e.~both the satisfaction and the refutation set of any formula are {\em stable}. The stability requirement, which is mathematically justified by the need of defining a compositional semantics for $\mathcal{L}$, can also be understood at a more fundamental level: if $E$ encodes an {\em inherent boundary} to perfect knowability (i.e.~the {\em informational entropy} of the title), this boundary should be incorporated in the meaning of formulas which are both satisfied and refuted `up to $E$', i.e.~not by arbitrary subsets of the domain of the graph, but only by subsets which are preserved (i.e.~faithfully translated) in the shift from $\Delta$ to $E$. 

This  is similar to the {\em persistency} restriction in the interpretation of formulas of intuitionistic (modal) logic. Just like the interpretation of implication changes in the shift from classical to intuitionistic semantics, the interpretation of {\em disjunction} changes from classical to graph-based semantics and becomes {\em weaker}: the stipulation $\val{\phi\vee\psi} = (\descr{\phi}\cap \descr{\psi})^{[0]}$ requires a state $z$ to satisfy $\phi\vee\psi$ exactly when $z$ can be told apart from any state that refutes both $\phi$ and $\psi$. All states in $\val{\phi}\cup\val{\psi}$ will satisfy this requirement, but more states might as well which neither satisfy $\phi$ nor $\psi$, provided that $E$ detects their being different from every state that refutes both $\phi$ and $\psi$.

Additional relations on graphs-based frames can be regarded as encoding  {\em subjective indiscernibility}, i.e.~$zR_\Box y$ iff $z$ is indiscernible from $y$ according to a given agent.  Under this interpretation,
the stipulation $\val{\Box\phi} = R_{\Box}^{[0]}[\descr{\phi}]$ requires $\Box \phi$ to be satisfied at exactly those states that the agent can tell  apart from each state refuting $\phi$, and the stipulation $\descr{\Diamond\phi} = R_{\Diamond}^{[0]}[\val{\phi}]$ requires $\Diamond \phi$ to be refuted at exactly those states that the agent can tell  apart from each state satisfying $\phi$, and  be satisfied at the states that can be told apart from every state in $\descr{\Diamond\phi}$. 
Hence, under the interpretation indicated above, these semantic clauses  support the usual reading of   $\Box\phi$ as `the agent knows/believes $\phi$' and $\Diamond \phi$ as `the agent considers $\phi$ plausible'. 

Finally, we illustrate, by way of examples, how this interpretation coherently extends to axioms.
In Proposition \ref{lemma:correspondences}, we show that, also on graph-based frames, well known modal axioms from classical modal logic have first-order correspondents, which are the parametrized `$E$-counterparts' of the first order correspondents on Kripke frames. Interestingly, this surface similarity goes deeper, and in fact guarantees that the intended meaning of a given axiom under a given interpretation is preserved in the translation from $\Delta$ to $E$. As a first illustration of this phenomenon, consider the axiom $\Box\phi\vdash \phi$, which, under the epistemic reading,  in classical modal logic captures the characterizing property of the {\em factivity} of knowledge (if the agent knows $\phi$, then $\phi$ is true). This axiom corresponds to $E\subseteq R_\Box$ on graph-based frames (cf.~Proposition \ref{lemma:correspondences}). This condition requires that if the agent tells apart $z$ from $y$, then indeed $z$ is not indistinguishable from $y$. That is, the agent's assessments are correct, which {\em mutatis mutandis}, is exactly what factivity is about. 

Likewise, as is well known, under the epistemic reading, axiom $\Box\phi\vdash \Box\Box\phi$  captures the so called {\em positive introspection}  condition: knowledge of $\phi$ implies knowledge of knowing $\phi$. This axiom corresponds to $R_\Box\bullet_E R_\Box\subseteq R_\Box$ on graph-based frames (cf.~Proposition \ref{lemma:correspondences}). 
This condition requires that if the agent cannot distinguish a state $y$ from $a$ and nothing from which $y$ is (in principle) indistinguishable she can distinguish from $x$, then she cannot distinguish $x$ from $a$. Equivalently, if she can distinguish $x$ from $a$, then every state which she cannot distinguish from $a$ cannot be distinguished (in principle) from some state from which she can distinguish $x$. This is exactly what positive introspection  is about. 
 As a third example, consider the axiom $\phi\vdash \Box\phi$, which in the epistemic logic literature is referred to as the  {\em omniscience} principle (if $\phi$ is true, then the agent knows $\phi$). This axiom corresponds to $R_\Box\subseteq E$ on graph-based frames (cf.~Proposition \ref{lemma:correspondences}). This condition requires the agent to tell apart $z$ from every state $y$ from which $z$ is not indistinguishable, which is indeed what an omniscient agent should be able to do. 

\section{Sources of informational entropy} 
\label{sec:examples}


In this section we discuss two examples of the use graph-based models to capture situations where informational entropy arises. The first considers synonymy in natural a language while the second deals with colour perception an the limits of the human visual apparatus.

\paragraph{Synonymy in natural language.}

The exact nature of synonymy is debated, but there is evidence to suggest that this relation, although reflexive, can fail to be an equivalence, both on symmetry and transitivity. For example, one study \cite{chodorow1988tool} looks at English synonyms in an online thesaurus and finds high degree of asymmetry. For example, \url{http://thesaurus.com} lists \emph{cushion} in the entry for \emph{pillow}, but does not list \emph{pillow} in the entry for \emph{cushion}, suggesting that cushion is a synonym for pillow but not vice versa. To take another example, in a South African context, the term \emph{chips} covers both what Americans would call \emph{fries} and what the British would call \emph{crisps}. A South African English speaker would thus regards \emph{chips} as a synonym for both  \emph{fries} and \emph{crisps}, but would regard neither \emph{fries} nor \emph{crisps} as synonyms for \emph{chips}. \emph{Chips} is by far the most commonly used word, with \emph{fries} and \emph{crisps} only used when disambiguation is required. This can be modelled with the 
graph-based frame in the figure below, where the solid arrows represent the $E$-relation, taken as the South African synonymity relation. As the reader can easily verify, the closed sets of this graph are exactly $\varnothing$, $\{\text{\it fries} \}$, $\{\text{\it crisps} \}$, $\{\text{\it fries}, \text{\emph{crisps}}\}$ and $\{\text{\it crisps}, \text{\emph{chips}},\text{\it fries}\}$\footnote{Notice that since the $E$-relation in this example is only `one step', it is automatically transitive and therefore a pre-order. Hence, unsurprisingly, the associated concept lattice is distributive.}. For any given word, the smallest of these sets containing it can be thought of as its `semantic scope'.  In particular, this accurately represents the fact that the words \emph{fries} and \emph{crisps} have unambiguous meanings while, without the benefit of context, \emph{chips} could mean either of the others.     

Now consider an American tourist trying to make sense of local usage. Having some experience with British usage, she assumes \emph{chips} and \emph{fries} as interchangeable terms, and say she also knows that South Africans use \emph{chips} as a synonym for \emph{crisps}. This epistemic situation is modelled by the dashed arrows in the figure below which define the $E$-compatible relation $R$.

	\begin{center}
		\begin{tikzpicture}[scale=1]
		\node (fries)    at (-2,0)    {\footnotesize fries};
		\node (crisps)   at (2,0)    {\footnotesize crisps};
		\node (chips)    at (0,1)    {\footnotesize chips};
		\draw[->, loop above, min distance=5mm]  (fries) to node {} (fries);
		\draw[->, loop above, min distance=5mm]  (chips) to node {} (chips);
		\draw[->, loop above, min distance=5mm]  (crisps) to node {} (crisps);
		\draw[->, dashed, loop left, min distance=5mm]  (fries) to node {} (fries);
		\draw[->, dashed, loop below, min distance=5mm]  (chips) to node {} (chips);
		\draw[->, dashed, loop right, min distance=5mm]  (crisps) to node {} (crisps);
		\draw[<->, dashed, bend left=15] (fries) to node {} (chips);
		\draw[->, dashed, bend left=15] (chips) to node {} (crisps);
		\draw[<-] (fries) to node {} (chips);
		\draw[<-] (crisps) to node {} (chips);
		\end{tikzpicture}	
	\end{center}

We could evaluate a proposition letter $p$, with intended interpretation  `specific terms for fried potatoes', to  $(\val{p}, \descr{p}) = (\{\text{\it fries}, \text{\emph{crisps}}\}, \{\text{\emph{chips}}\})$, which would yield  $\val{\Box_{R} p} = \{\text{\it crisps}\}$ capturing the fact that \emph{crisps} is the only term the tourist can be sure denotes a specific kind of fried potato.

		


\paragraph{Perceptual limits.}
The wavelength of visible light lies roughly in the rage from 380 to 780 nanometres. The smallest difference between wavelengths in this range which is detectable by the human eye is known as the \emph{differentiation minimum}. The differentiation minimum varies with wavelengths and is best in the green-blue (around $490$ nm) and orange  (around $590$ nm) spectra, where it is as low as $1$ nm. It goes as high as $7$ nm in the low $400$ and middle $600$ ranges, but averages round $4$ nm over the spectrum of visible light. Deficient colour vision is characterized by significantly higher individual differentiation minima in certain ranges \cite{KrudyLadunga}.

We model this situation using a graph-based frame. Firstly, write $[380,780]$ for $\{ x \in \mathbb{N} \mid 380 \leq x  \leq 780 \}$ and represent the differentiation minimum by the function $\delta: [380,780] \rightarrow \mathbb{N}$ mapping every integer valued wavelength between $380$ nm and $780$ nm to the associated differentiation minimum. Represent the (possibly deficient) colour vision of an agent $A$ by $\delta_A: [380,780] \rightarrow \mathbb{N}$ such that $\delta_A(x) \geq \delta(x)$ for all $x \in [380,780]$. We will make the assumption that $\delta$ has no sudden ``jumps'', specifically, that for all $x \in [380,779]$, $|\delta(x) - \delta(x+1)| \leq 1$. We will assume that for all $x \in [380,780]$, if $(x - \delta_{A}(x)) \geq 380$, there exists $x_{\ell} \in [x  - \delta_{A}(x) + 1, x]$ such that $\delta(x_{\ell}) = x_{\ell} - (x - \delta_{A}(x))$ and, symmetrically, that if  $(x + \delta_{A}(x)) \leq 780$, there exists $x_{r} \in [x, x + \delta_{A}(x) - 1]$ such that $\delta(x_{r}) = (x + \delta_{A}(x)) - x_{r}$.  This assumption is needed for technical reasons. However, is justified in the case of $x_{\ell}$ (and symmetrically in the case of $x_r$) by the consideration that, since $x-\delta_A(x)$ is the first point to the left of $x$ in the spectrum which agent can discern from $x$, there should be a point in between $x-\delta_A(x) + 1$ and $x$ which is minimally discernible from $x-\delta_A(x)$ according to differentiation minimum (and could be $x$ itself, if the agent's perception at this point coincides with the differentiation minimum).

Let $\mathbb{F} = (\mathbb{X},  R_{\Diamond}, R_{\Box})$ where $\mathbb{X} = ([380,780],E)$ such that $xEy$ iff $|x - y| < \delta(x)$ and $x R_{\Diamond} y$ iff $x R_{\Box} y$ iff $|x - y| < \delta_A(x)$. Note that $E$ is reflexive, but need be neither symmetric nor transitive.  Using the assumptions above, one can prove that $R_{\Box}$ is $E$-compatible.

Suitable proposition letters to interpret on $\mathbb{F}$ would be colour terms like \emph{green}, \emph{yellow}, \emph{orange} etc. For example, according to the standard division of the spectrum into colours, one would evaluate $\val{\text{green}} = [520,560]$, $\val{\text{yellow}} = [560,590]$ and $\val{\text{orange}} = [590,635]$. As a simplified and stylized example (but one nevertheless not too unrealistic for the range we focus on subsequently), let us take $\delta$ and $\delta_{A}$ to be defined as in the following table:

\begin{center}
	\begin{tabular}{|c|c|c|}
		\hline
		Interval &$\delta$ &$\delta_{A}$\\
		\hline
		$370$ - $519$ &3 &7\\
		\hline
		$520$ - $550$ &4 &8\\
		\hline
		$551$ - $570$ &3 &7\\
		\hline
		$571$ - $780$ &2 &6\\
		\hline
	\end{tabular}
\end{center}

In this model we get $\val{\Box \text{green}} = R^{[0]}[\descr{\text{green}}] = R^{[0]}[\,[370,516] \cup [563,780] \, ] = [524,556]$ which represent the range of wavelengths that the agent definitely perceives as green. On the other hand $\descr{\Diamond \text{green}} = R^{[1]}[\val{\text{green}}] = R^{[1]}[[520,560]] = [370,512] \cup [567,780]$ which is the set of wavelengths which the agent definitely perceives as \emph{not} green. This leaves the intervals $[513,523]$ and $[557,568]$ where the agent cannot tell whether the corresponding colour is green or not.    




\section{Conclusions}

The present contributions lay the ground for a number of further developments, some of which are listed below.

{\bf Parametric Sahlqvist theory.} In Proposition \ref{lemma:correspondences} we were able to formulate our correspondence results as parametric versions (where $E$ is the parameter)  of well known relational properties such as reflexivity and transitivity (cf.~Definition \ref{def:terminology}).  This phenomenon was also observed in \cite[Proposition 5]{roughconcepts}.  A natural question is whether these instances can be subsumed by a more general and systematic parametric Sahlqvist theory, where the generalized frame correspondent of any Sahlqvist formula would be obtainable directly as a parametrization of its classical frame correspondents.

{\bf G\"odel-McKinsey-Tarski translation.} As mentioned in Section \ref{sec: interpretation}, one way of making sense of the present framework is by comparing it with the relational semantics of intuitionistic logic. In the later, the relation $E$ is reflexive and transitive,  and rather than being used to generate the semantics of modal operators on powerset algebras, it is used to generate an algebra of  stable sets, namely the persistent (i.e.~upward closed or downward closed) sets. Hence a natural direction is to build a non-distributive version of the transfer results induced by a suitable counterpart of G\"odel-McKinsey-Tarski translation. We are presently pursuing this direction.

{\bf Many-valued graph-based semantics.} In this paper, we only treat examples of informational entropy due to linguistic and perceptual limits. However, a very interesting area of application for this framework is the formal analysis of informational entropy induced by theoretical frameworks adopted to conduct scientific experiments. These situations are also amenable to be studied using a many-valued version of the present framework, which we have started to outline in \cite{graph-based-MV}.


\bibliography{ref}
\bibliographystyle{plain}	 
\appendix
\newpage
\section{Equivalent compatibility conditions in formal contexts}

\begin{lemma}\label{equivalents of I-compatible appendix}
	\begin{enumerate}
		\item The following are equivalent for every formal context $\mathbb{P} = (A, X, I)$ and every relation $R\subseteq A\times X$:
		\begin{enumerate}
			\item [(i)] $R^{(0)}[x]$ is Galois-stable for every $x\in X$;
			
			\item [(ii)]  $R^{(0)} [Y]$ is Galois-stable for every $Y\subseteq X$;
			\item [(iii)] $R^{(1)}[B]=R^{(1)}[B^{\uparrow\downarrow}]$ for every  $B\subseteq A$.
		\end{enumerate}
		\item The following are equivalent for every formal context $\mathbb{P} = (A, X, I)$ and every relation $R\subseteq A\times X$:

		\begin{enumerate}
			\item [(i)] $R^{(1)}[a]$ is Galois-stable for every $a\in A$;
			
			\item [(ii)]  $R^{(1)} [B]$ is Galois-stable, for every $B\subseteq A$;
			\item[(iii)] $R^{(0)}[Y]=R^{(0)}[Y^{\downarrow\uparrow}]$ for every $Y\subseteq X$.
		\end{enumerate}
	\end{enumerate}
\end{lemma}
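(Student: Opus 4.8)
The statement is Lemma~\ref{equivalents of I-compatible appendix}, which asserts the equivalence, within each of its two items, of three conditions about a relation $R$ on a formal context $\mathbb{P} = (A, X, I)$: (i) pointwise Galois-stability of the sets $R^{(0)}[x]$ (resp.\ $R^{(1)}[a]$), (ii) the same for arbitrary subsets, and (iii) a stability-insensitivity identity for the conjugate operator. The plan is to prove item~1 and then observe that item~2 follows by the evident symmetry of the setup (swapping the roles of $A$ and $X$, $(\cdot)^\uparrow$ and $(\cdot)^\downarrow$, and $R^{(0)}$ and $R^{(1)}$).

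For item~1, I would first establish $(ii)\Rightarrow(i)$, which is trivial since singletons $\{x\}$ are a special case of subsets $Y\subseteq X$. The substance is in $(i)\Rightarrow(ii)$ and in the equivalence of (ii) and (iii). For $(i)\Rightarrow(ii)$: by Lemma~\ref{lemma: basic}.5 (the analogue of the fact that $T^{(0)}$ turns unions into intersections), $R^{(0)}[Y] = R^{(0)}[\bigcup_{x\in Y}\{x\}] = \bigcap_{x\in Y} R^{(0)}[x]$, an intersection of Galois-stable sets; and an arbitrary intersection of Galois-stable sets is Galois-stable (standard fact about closure operators — the closed sets of the Galois connection $({\uparrow},{\downarrow})$ form a complete lattice closed under the ambient intersection). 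Hence $R^{(0)}[Y]$ is Galois-stable. For $(ii)\Leftrightarrow(iii)$: note that $R^{(1)}[B] = R^{(1)}[B^{\uparrow\downarrow}]$ for all $B$ is a statement about the image of the map $B\mapsto R^{(1)}[B]$ composed with the closure $B\mapsto B^{\uparrow\downarrow}$. The key bridging identity is $R^{(1)}$ and $R^{(0)}$ forming a Galois connection between $\mathcal{P}(A)$ and $\mathcal{P}(X)$ (Lemma~\ref{lemma: basic}.2–.4): in particular $R^{(1)}[B] = R^{(1)}[R^{(0)}[R^{(1)}[B]]]$ always, so $R^{(1)}[B]$ is always $R^{(0)}$-stable; the content is whether it is $\uparrow\downarrow$-stable. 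I would argue: (iii) says $R^{(1)}$ is constant on Galois-equivalence classes, i.e.\ factors through the closure; dually, applying $R^{(0)}$, one gets that $R^{(0)}[R^{(1)}[B]]$ — which is the same as the double application and equals $R^{(0)}$ applied to $R^{(1)}[B^{\uparrow\downarrow}]$ — reveals stability. More carefully, I would show $(ii)\Rightarrow(iii)$ by: $R^{(1)}[B^{\uparrow\downarrow}] = R^{(1)}[B^{\uparrow\downarrow}{}^{\uparrow\downarrow}]\supseteq$ handled via monotonicity ($B\subseteq B^{\uparrow\downarrow}$ gives $R^{(1)}[B^{\uparrow\downarrow}]\subseteq R^{(1)}[B]$ by Lemma~\ref{lemma: basic}.1), and the reverse inclusion $R^{(1)}[B]\subseteq R^{(1)}[B^{\uparrow\downarrow}]$ from the fact that $B\subseteq R^{(0)}[R^{(1)}[B]]$, and $R^{(0)}[R^{(1)}[B]]$ is Galois-stable by (ii), hence contains $B^{\uparrow\downarrow}$, whence applying $R^{(1)}$ and Lemma~\ref{lemma: basic}.1 (order-reversal) plus Lemma~\ref{lemma: basic}.4 gives $R^{(1)}[B^{\uparrow\downarrow}]\subseteq R^{(1)}[R^{(0)}[R^{(1)}[B]]] = R^{(1)}[B]$. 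For $(iii)\Rightarrow(ii)$: given $Y\subseteq X$, write $Y = R^{(1)}[R^{(0)}[Y]]$ is not generally true, so instead I note it suffices to show $R^{(0)}[Y]$ is Galois-stable; set $B = R^{(0)}[Y]$ and compute $B^{\uparrow\downarrow} = R^{(0)}[Y]^{\uparrow\downarrow}$. Using Lemma~\ref{lemma: basic}.4, $R^{(0)}[Y] = R^{(0)}[R^{(1)}[R^{(0)}[Y]]]$, so it is enough to relate $R^{(1)}[R^{(0)}[Y]]$ with the closure; applying (iii) with $B := R^{(0)}[Y]$ (legitimate since (iii) is universally quantified over $B\subseteq A$) gives $R^{(1)}[R^{(0)}[Y]] = R^{(1)}[R^{(0)}[Y]^{\uparrow\downarrow}]$, and then applying $R^{(0)}$ and Lemma~\ref{lemma: basic}.4 again yields $R^{(0)}[Y] = R^{(0)}[R^{(1)}[R^{(0)}[Y]^{\uparrow\downarrow}]]\supseteq R^{(0)}[Y]^{\uparrow\downarrow}$ by Lemma~\ref{lemma: basic}.3; together with $R^{(0)}[Y]\subseteq R^{(0)}[Y]^{\uparrow\downarrow}$ this gives equality, i.e.\ $R^{(0)}[Y]$ is Galois-stable.

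The main obstacle I anticipate is bookkeeping: keeping straight the two antitone Galois connections in play (the context's own $({\uparrow},{\downarrow})$ between $\mathcal{P}(A)$ and $\mathcal{P}(X)$, and the relation-induced $(R^{(1)}, R^{(0)})$ between the same two powersets), and correctly invoking the order-reversal in Lemma~\ref{lemma: basic}.1 versus the closure/kernel identities in Lemma~\ref{lemma: basic}.3–.4 at each step. There is no deep difficulty — everything reduces to formal manipulation of these two adjunctions — but the directions of inclusions must be tracked with care. Once item~1 is in hand, item~2 is obtained purely by the duality $A\leftrightarrow X$, $I\leftrightarrow I^{-1}$, which swaps $({\uparrow})\leftrightarrow({\downarrow})$ and $R^{(1)}\leftrightarrow R^{(0)}$ while preserving the form of all the cited lemmas, so no separate argument is required; I would simply state this and be done.
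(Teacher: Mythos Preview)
Your approach is correct and essentially the same as the paper's: both rely on the elementary properties of the two antitone Galois connections $(\uparrow,\downarrow)$ and $(R^{(1)},R^{(0)})$ collected in Lemma~\ref{lemma: basic}, and both dismiss item~2 by symmetry. The paper organizes the equivalences slightly differently --- it shows $(i)\Leftrightarrow(ii)$ and then $(i)\Leftrightarrow(iii)$ directly (so it never needs that $R^{(0)}[R^{(1)}[B]]$ is Galois-stable: for $(i)\Rightarrow(iii)$ it takes $x\in R^{(1)}[B]$, uses Lemma~\ref{lemma: basic}.2 to get $B\subseteq R^{(0)}[x]$, invokes $(i)$ to get $B^{\uparrow\downarrow}\subseteq R^{(0)}[x]$, and applies Lemma~\ref{lemma: basic}.2 again; for $(iii)\Rightarrow(i)$ it applies $(iii)$ with $B:=R^{(0)}[x]$). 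Your route via $(ii)\Leftrightarrow(iii)$ is equivalent in content but a touch longer.

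One slip to fix: in your $(ii)\Rightarrow(iii)$ argument you correctly derive $B^{\uparrow\downarrow}\subseteq R^{(0)}[R^{(1)}[B]]$, but when you apply the antitone $R^{(1)}$ the inclusion should read $R^{(1)}[R^{(0)}[R^{(1)}[B]]]\subseteq R^{(1)}[B^{\uparrow\downarrow}]$, i.e.\ $R^{(1)}[B]\subseteq R^{(1)}[B^{\uparrow\downarrow}]$ after Lemma~\ref{lemma: basic}.4 --- which is exactly the ``reverse inclusion'' you announced. As written you have the final inclusion the wrong way round.
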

\begin{proof} We only prove item 1, the proof of item 2 being similar. For $(i)\Rightarrow (ii)$, see \cite[Lemma 4]{Tarkpaper}. The converse direction is immediate.
	
	$(i)\Rightarrow (iii)$. Since $(\cdot)^{\uparrow\downarrow}$ is a closure operator, $B\subseteq B^{\uparrow\downarrow}$. Hence, Lemma \ref{lemma: basic}.1 implies that $R_{\Box}^{(1)}[B^{\uparrow\downarrow}]\subseteq R_{\Box}^{(1)}[B]$. For the converse inclusion, let $x\in R_{\Box}^{(1)}[B]$. By Lemma \ref{lemma: basic}.2, this is equivalent to $B\subseteq R_{\Box}^{(0)}[x]$. Since  $R_{\Box}^{(0)}[x]$ is Galois-stable by assumption, this implies that $B^{\uparrow\downarrow}\subseteq R_{\Box}^{(0)}[x]$, i.e., again by Lemma \ref{lemma: basic}.2, $x\in R_{\Box}^{(1)}[B^{\uparrow\downarrow}]$. This shows that $R_{\Box}^{(1)}[B]\subseteq R_{\Box}^{(1)}[B^{\uparrow\downarrow}]$, as required.

	$(iii)\Rightarrow (i)$. Let $x\in X$. It is enough to show that $(R_{\Box}^{(0)}[x])^{\uparrow\downarrow}\subseteq R_{\Box}^{(0)}[x]$.  By Lemma \ref{lemma: basic}.2, $R_\Box^{(0)}[x]\subseteq R_\Box^{(0)}[x]$ is equivalent to $x\in R_\Box^{(1)}[R_\Box^{(0)}[x]]$. By assumption, $R_{\Box}^{(1)}[R_\Box^{(0)}[x]]=R_\Box^{(1)}[(R_\Box^{(0)}[x])^{\uparrow\downarrow}]$, hence $x\in R_\Box^{(1)}[(R_\Box^{(0)}[x])^{\uparrow\downarrow}]$. Again by Lemma \ref{lemma: basic}.2, this is equivalent to $(R_{\Box}^{(0)}[x])^{\uparrow\downarrow}\subseteq R_{\Box}^{(0)}[x]$, as required.	
\end{proof}

\section{Composing relations on graph-based structures}
\label{sec:appendix:properties of E-comp}

The present section collects properties of the $E$-compositions  (cf.~Definition \ref{def:E-composition of relations}).

\begin{lemma}
\label{lemma: E-composition as composition of maps}
For any graph $\mathbb{X} = (Z,E)$, relations $R, S\subseteq Z \times Z$ and $a, x\in Z$,
	\begin{center}
		\begin{tabular}{lcl}
			$(R \circ_{E} S)^{[0]}[a]= R^{[0]}[E^{[0]}[S^{[0]}[a]]]$, &$\quad \quad$ &$(R \circ_{E} S)^{[1]}[x]= R^{[1]}[E^{[1]}[S^{[1]}[x]]]$,\\
			$(R \bullet_{E} S)^{[0]}[x]= R^{[0]}[E^{[1]}[S^{[0]}[x]]]$ &and  &$(R \bullet_{E} S)^{[1]}[a]= R^{[1]}[E^{[0]}[S^{[1]}[a]]]$.\\
		\end{tabular}
	\end{center}
\end{lemma}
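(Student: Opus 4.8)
The identities to be proven are all of the same shape, so the plan is to prove the first one, namely $(R \circ_{E} S)^{[0]}[a]= R^{[0]}[E^{[0]}[S^{[0]}[a]]]$, in full detail, and then indicate that the remaining three follow by entirely analogous reasoning (with appropriate bookkeeping of superscripts $[0]$ versus $[1]$, and with $\bullet_E$ in place of $\circ_E$). The strategy is to unwind both sides by the definitions \eqref{eq:def:square brackets} of the bracket operations and Definition~\ref{def:E-composition of relations} of $E$-composition, and match the resulting first-order conditions. Recall from Footnote~\ref{footnote: abbreviations} (via Lemma~\ref{lemma:round and square brackets}) that for a graph, the square-bracket operations are exactly the polarity maps of $\mathbb{P_X}$; in particular $E^{[0]}[W] = W^{\downarrow}$ and $E^{[1]}[W] = W^{\uparrow}$, and these are related to $E^{(0)}, E^{(1)}$ via $E^{[i]} = (E^c)^{(i)}$. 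So one can work throughout with the round-bracket presentation, which is convenient since Definition~\ref{def:E-composition of relations} is phrased using $E^{(1)}[b]$ and $S^{(0)}[a]$, but I will need to be careful that the composition is defined using the $(\cdot)$-operators of the relation $E$ itself (not $E^c$), while the $[\cdot]$-operators on the left-hand side are those of $E^c$.

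\textbf{Key steps for the first identity.} First, spell out the left-hand side: by \eqref{eq:def:square brackets}, $x \in (R\circ_E S)^{[0]}[a]$ iff $x (R\circ_E S)^c a$, i.e.\ $\neg\big(\exists b\,(xRb \ \&\ E^{(1)}[b]\subseteq S^{(0)}[a])\big)$, which is $\forall b\,(xRb \Rightarrow E^{(1)}[b]\not\subseteq S^{(0)}[a])$. Second, spell out the right-hand side by peeling off the three nested brackets from the inside: $S^{[0]}[a]$ is a subset of $Z$, then $E^{[0]}[S^{[0]}[a]] = \{w \mid \forall v(v\in S^{[0]}[a] \Rightarrow w E^c v)\}$, and finally $w'\in R^{[0]}[E^{[0]}[S^{[0]}[a]]]$ iff $\forall w(w\in E^{[0]}[S^{[0]}[a]] \Rightarrow w' R^c w)$, i.e.\ $\forall w\,(w R^{c} w' \text{ fails} \Rightarrow \dots)$ — more precisely $\forall w\,(w'Rw \Rightarrow w \notin E^{[0]}[S^{[0]}[a]])$, and $w\notin E^{[0]}[S^{[0]}[a]]$ means $\exists v\,(v\in S^{[0]}[a] \ \&\ wEv)$. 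Third, match: taking $w = b$ and noting $E^{(1)}[b] = \{v \mid bEv\}$, the condition $\exists v(v \in S^{[0]}[a] \ \&\ bEv)$ says precisely $E^{(1)}[b] \cap S^{[0]}[a] \neq \varnothing$. So I need the equivalence of $E^{(1)}[b]\not\subseteq S^{(0)}[a]$ (left side) with $E^{(1)}[b]\cap S^{[0]}[a] \neq \varnothing$ (right side). These are \emph{not} the same set on the face of it: the left uses $S^{(0)}[a]$ (built from $S$) and its complement, the right uses $S^{[0]}[a] = (S^c)^{(0)}[a]$. But indeed $S^{[0]}[a] = (S^c)^{(0)}[a] = \{u \mid u S^c a\} = \{u \mid \neg(uSa)\} = (S^{(0)}[a])^c$ when $a$ is a singleton — wait, $S^{(0)}[\{a\}] = \{u \mid uSa\}$, so $S^{[0]}[a] = (S^{(0)}[a])^c$ holds for singletons. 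Hence $E^{(1)}[b]\cap S^{[0]}[a] = E^{(1)}[b] \setminus S^{(0)}[a]$, which is nonempty iff $E^{(1)}[b]\not\subseteq S^{(0)}[a]$. This closes the matching, and the two first-order conditions coincide, giving the identity.

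\textbf{Main obstacle.} The reasoning is purely a definition-chase, so there is no deep obstacle; the one genuine care-point is the bookkeeping between the $(\cdot)$-operators (used in Definition~\ref{def:E-composition of relations}, and genuinely built from $E$, $S$) and the $[\cdot]$-operators (used on the left-hand side of the claimed identities, built from $E^c$, $R^c$, $S^c$), together with the singleton-complementation fact $T^{[0]}[t] = (T^{(0)}[t])^c$ used above. A secondary point is to get the quantifier structure exactly right after taking the negation in the definition of $(R\circ_E S)^c$ — specifically that $\neg\exists b(\cdots)$ becomes $\forall b(\cdots\Rightarrow\neg\cdots)$, matching the universally-quantified unwinding of the triple bracket on the right. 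For the three remaining identities, the same template applies: for $(R\circ_E S)^{[1]}[x]$ one uses $E^{[1]} = (\cdot)^{\uparrow}$ in the middle position; for the two $\bullet_E$ identities, Definition~\ref{def:E-composition of relations} gives $a(R\bullet_E S)x$ iff $\exists y(aRy \ \&\ E^{(0)}[y]\subseteq S^{(0)}[x])$, so the middle bracket flips to $E^{[1]}$ in $(R\bullet_E S)^{[0]}[x] = R^{[0]}[E^{[1]}[S^{[0]}[x]]]$ and to $E^{[0]}$ in the last one; in each case the inner/outer brackets track the $R$- and $S$-arguments exactly as in the first computation. I would present the first identity in full and remark that the others are proven "likewise, mutatis mutandis."
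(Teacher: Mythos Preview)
Your proof is correct and follows essentially the same approach as the paper's: both are definition-chases that unfold the bracket operators and use the singleton-complementation fact $T^{[0]}[t] = (T^{(0)}[t])^c$ to pass between the $[\cdot]$-operators and the $(\cdot)$-operators appearing in Definition~\ref{def:E-composition of relations}. The only stylistic difference is that the paper presents a single linear chain of equalities starting from $R^{[0]}[E^{[0]}[S^{[0]}[a]]]$ and ending at $(R\circ_E S)^{[0]}[a]$ (routing through the intermediate description $\{b \mid S^{[0]}[a]\subseteq E^{[1]}[b]\} = \{b \mid E^{(1)}[b]\subseteq S^{(0)}[a]\}$), whereas you unwind both sides separately and match the resulting first-order conditions in the middle; the paper also writes out the $\bullet_E$ case explicitly rather than leaving it to ``mutatis mutandis''.
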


\begin{proof} We only prove  the identities in the left column.
	\begin{center}
	\begin{tabular} {r c l l}
	$R^{[0]}[E^{[0]}[S^{[0]}[a]]]$ & = & $R^{[0]}[E^{[0]}[\{ x\mid xS^c a\}]]$ & definition of $S^{[0]}[a]$\\
	& = & $R^{[0]}[\{ b\mid \forall x(xS^c a\Rightarrow b E^c x)\}]$ & definition of $E^{[0]}[ -]$\\
	& = & $R^{[0]}[\{ b\mid S^{[0]}[ a]\subseteq  E^{[1]} [b]\}]$ \\
	& = & $R^{[0]}[\{ b\mid E^{(1)}[ b]\subseteq  S^{(0)} [a]\}]$ & Lemma \ref{lemma:round and square brackets} \\
	& = & $\{ x\mid \forall b(E^{(1)}[ b]\subseteq  S^{(0)} [a]\Rightarrow xR^c b)\}$& definition of $R^{[0]}[ -]$\\
	& = & $(\{ x\mid \exists b(xR b \; \&\; E^{(1)}[ b]\subseteq  S^{(0)} [a])\})^c$\\
	& = & $(\{ x\mid x  (R \circ_{E} S) a\})^c$ & Definition \ref{def:E-composition of relations}\\
	& = & $\{ x\mid x  (R \circ_{E} S)^c a\}$ \\
        & = & $(R \circ_{E} S)^{[0]}[a]$.\\
	\end{tabular}
	\end{center}

	\begin{center}
		\begin{tabular} {r c l l}
			$R^{[0]}[E^{[1]}[S^{[0]}[x]]]$ & = & $R^{[0]}[E^{[1]}[\{ a\mid aS^c x\}]]$ & definition of $S^{[0]}[x]$\\
			& = & $R^{[0]}[\{ y\mid \forall a(aS^c x\Rightarrow a E^c y)\}]$ & definition of $E^{[1]}[ -]$\\
			& = & $R^{[0]}[\{ y\mid S^{[0]}[ x]\subseteq  E^{[0]} [y]\}]$ \\
			& = & $R^{[0]}[\{ y\mid E^{(0)}[ y]\subseteq  S^{(0)} [x]\}]$ & Lemma \ref{lemma:round and square brackets} \\
			& = & $\{ b\mid \forall y(E^{(0)}[ y]\subseteq  S^{(0)} [x]\Rightarrow bR^c y)\}$& definition of $R^{[0]}[ -]$\\
			& = & $(\{ b\mid \exists y(bR y \; \&\; E^{(0)}[ y]\subseteq  S^{(0)} [x])\})^c$\\
			& = & $(\{ b\mid b  (R \bullet_{E} S) x\})^c$ & Definition \ref{def:E-composition of relations}\\
			& = & $\{ b\mid b  (R \bullet_{E} S)^c x\}$ \\
			& = & $(R \bullet_{E} S)^{[0]}[x]$.\\
		\end{tabular}
	\end{center}
	
\end{proof}
\begin{lemma}
	If $R, T\subseteq Z\times Z$ and $R$ is  $E$-compatible, then so are $R \circ_{E} T$ and $R \bullet_{E} T$.
\end{lemma}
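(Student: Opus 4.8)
The plan is to reduce the claim about $E$-compatibility of $R\circ_E T$ and $R\bullet_E T$ to the characterization of $E$-compatibility given in Lemma~\ref{equivalents of I-compatible}, combined with the explicit formulas for the $E$-composition operations proved in Lemma~\ref{lemma: E-composition as composition of maps}. Recall that a relation $S\subseteq Z\times Z$ is $E$-compatible precisely when all four closure inequalities in Definition~\ref{def:graph:based:frame:and:model} hold; by Lemma~\ref{equivalents of I-compatible} these are equivalent to the statements that $S^{[0]}[W]$ and $S^{[1]}[W]$ are stable (i.e.\ equal to their own $(\cdot)^{[01]}$ resp.\ $(\cdot)^{[10]}$ closures) for all $W\subseteq Z$, or equivalently that $S^{[1]}[B]=S^{[1]}[B^{[10]}]$ and $S^{[0]}[Y]=S^{[0]}[Y^{[01]}]$ for all $B,Y\subseteq Z$.

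First I would record the key observation that for \emph{any} relation $V\subseteq Z\times Z$ the sets $V^{[0]}[W]$ and $V^{[1]}[W]$ are always of the form $C^{[0]}[\,\cdot\,]$, hence are always stable: indeed by Lemma~\ref{lemma:round and square brackets} they equal $\downarrow$- resp.\ $\uparrow$-images under the polarity $\mathbb{P}_\X$, and by Lemma~\ref{lemma: basic:square:brackets}(4) any such set satisfies $(\cdot)^{[010]}=(\cdot)^{[0]}$ and $(\cdot)^{[101]}=(\cdot)^{[1]}$. Wait — one must be careful: the sets $E^{[0]}[W]$, $E^{[1]}[W]$ are stable for the closure operators $(\cdot)^{[01]}$ and $(\cdot)^{[10]}$ associated to $E$, and it is exactly this that makes $E$ itself "$E$-compatible" in the degenerate sense. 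The real content is that composing with a genuinely $E$-compatible $R$ on the outside preserves stability. So the actual argument runs: by Lemma~\ref{lemma: E-composition as composition of maps}, $(R\circ_E T)^{[0]}[a]=R^{[0]}\!\big[E^{[0]}[T^{[0]}[a]]\big]$ and $(R\circ_E T)^{[1]}[x]=R^{[1]}\!\big[E^{[1]}[T^{[1]}[x]]\big]$, with the analogous identities for $R\bullet_E T$. Since the outermost operation is $R^{[0]}[\,\cdot\,]$ (resp.\ $R^{[1]}[\,\cdot\,]$), and since $R^{[0]}[W]$ is \emph{always} stable (being a $\downarrow$-image, by Lemma~\ref{lemma:round and square brackets} and Lemma~\ref{lemma: basic:square:brackets}(3)--(4)), every value of $(R\circ_E T)^{[0]}[\,\cdot\,]$ and $(R\circ_E T)^{[1]}[\,\cdot\,]$ is stable. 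By Lemma~\ref{equivalents of I-compatible}(1)(ii)--(iii) and (2)(ii)--(iii) this is exactly $E$-compatibility of $R\circ_E T$; the same for $R\bullet_E T$.

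The step I expect to be the main obstacle — or at least the one requiring the most care — is making sure the stability I invoke is stability with respect to the \emph{correct} closure operator, and checking that the stated equivalents in Lemma~\ref{equivalents of I-compatible} really are the ones I need. Concretely: the condition "$S^{[0]}[y]$ is stable for every $y$" in Lemma~\ref{equivalents of I-compatible} refers to $(\cdot)^{[01]}$-stability, and this is automatic for \emph{any} set of the form $R^{[0]}[W]$ by the second identity in Lemma~\ref{lemma: basic:square:brackets}(4) applied with $T:=E^c$. Dually the $[1]$-version gives $(\cdot)^{[10]}$-stability. One subtlety worth double-checking is that the hypothesis "$R$ is $E$-compatible" is genuinely used: it enters because Lemma~\ref{lemma: E-composition as composition of maps} expresses $(R\circ_E T)^{[0]}[a]$ as $R^{[0]}$ of something, and $R^{[0]}[W]$ being stable needs nothing about $R$ — so in fact the lemma as stated seems to hold even without $E$-compatibility of $R$, the hypothesis being harmless. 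I would therefore phrase the proof cleanly: invoke Lemma~\ref{lemma: E-composition as composition of maps} to write each of the four maps $(R\circ_E T)^{[0]}[\,\cdot\,]$, $(R\circ_E T)^{[1]}[\,\cdot\,]$, $(R\bullet_E T)^{[0]}[\,\cdot\,]$, $(R\bullet_E T)^{[1]}[\,\cdot\,]$ with $R^{[0]}$ or $R^{[1]}$ outermost, note that images of $R^{[0]}[\,\cdot\,]$ and $R^{[1]}[\,\cdot\,]$ are stable by Lemma~\ref{lemma: basic:square:brackets}(3)--(4) (via Lemma~\ref{lemma:round and square brackets}), and conclude $E$-compatibility through Lemma~\ref{equivalents of I-compatible}, handling $\circ_E$ and $\bullet_E$ in parallel since the arguments are symmetric.
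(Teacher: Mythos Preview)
Your overall structure---use Lemma~\ref{lemma: E-composition as composition of maps} to write $(R\circ_E T)^{[0]}[a]$ and $(R\circ_E T)^{[1]}[x]$ with $R^{[0]}$ or $R^{[1]}$ as the outermost operation, and then argue that the result is $E$-stable---is exactly the paper's approach. But the step you identify as routine is in fact the one that fails.

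You claim that $R^{[0]}[W]$ is \emph{always} $E$-stable, citing Lemma~\ref{lemma:round and square brackets} and Lemma~\ref{lemma: basic:square:brackets}(3)--(4). This is a confusion between two different polarities. Lemma~\ref{lemma:round and square brackets} tells you that $R^{[0]}[W] = I_{R^c}^{(0)}[W]$, a $\downarrow$-image in the polarity $(Z,Z,I_{R^c})$ associated to $R$, \emph{not} in $\mathbb{P}_\X = (Z,Z,I_{E^c})$. Lemma~\ref{lemma: basic:square:brackets}(4) with $T:=R$ gives $R^{[0]}[W] = R^{[0]}[R^{[1]}[R^{[0]}[W]]]$, which is stability under the closure operator of the $R$-polarity. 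What you need is $(R^{[0]}[W])^{[10]} = E^{[0]}[E^{[1]}[R^{[0]}[W]]] \subseteq R^{[0]}[W]$, stability under the $E$-closure, and this is \emph{precisely} the content of $E$-compatibility of $R$ (cf.\ Lemma~\ref{equivalents of I-compatible}.1(ii)). A quick sanity check: take $E = Z\times Z$, so the only $E$-stable sets are $\varnothing$ and $Z$; for almost any nontrivial $R$, $R^{[0]}[y]$ will be a proper nonempty subset, hence not $E$-stable.

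So your conclusion that ``the hypothesis is harmless'' is wrong, and should have been the red flag: the paper's proof invokes $E$-compatibility of $R$ exactly at the point where it asserts $(R^{[0]}[\ldots])^{[10]} = R^{[0]}[\ldots]$, via Lemma~\ref{equivalents of I-compatible}. Once you replace your appeal to Lemma~\ref{lemma: basic:square:brackets}(4) by an appeal to Lemma~\ref{equivalents of I-compatible} (using the $E$-compatibility of $R$), your argument becomes correct and coincides with the paper's.
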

\begin{proof}
	 Let $a\in Z$. By Lemma \ref{lemma: E-composition as composition of maps},  $(R\, ;T)^{[0]} [a] = R^{[0]}[I^{[0]}[T^{[0]} [a]]]$, hence the following chain of identities holds: \[((R\, ;T)^{[0]} [a])^{[01]} = (R^{[0]}[I^{[0]}[T^{[0]} [a]]])^{[01]} = R^{[0]}[I^{[0]}[T^{[0]} [a]]] = (R\, ;T)^{[0]} [a],\] the second identity in the chain above following from the $E$-compatibility of  $R$ and Lemma \ref{equivalents of I-compatible}.1.  The remaining conditions for the $E$-compatibility of $R \circ_{E} T$ and and $R \bullet_{E} T$ are shown similarly.
	 \end{proof}

The following lemma is the  counterpart   of  \cite[Lemma 6]{roughconcepts} in graph-based semantics.

\begin{lemma}\label{lemma:comp4}
	If  $R,T\subseteq Z\times Z$ are $E$-compatible, then  for any $B,Y\subseteq Z$, \[(R\circ_E T)^{[1]}[Y]=R^{[1]}[E^{[1]}[T^{[1]}[Y]]] \quad \quad (R \circ_E T)^{[0]}[B]=R^{[0]}[E^{[0]}[T^{[0]}[B]]].\]
	\[(R\bullet_E T)^{[1]}[B]=R^{[1]}[E^{[0]}[T^{[1]}[B]]] \quad \quad (R \bullet_E T)^{[0]}[Y]=R^{[0]}[E^{[1]}[T^{[0]}[Y]]].\]
        \end{lemma}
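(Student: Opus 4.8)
The statement to prove is Lemma~\ref{lemma:comp4}, which upgrades the identities of Lemma~\ref{lemma: E-composition as composition of maps} from singletons to arbitrary subsets, under the extra hypothesis that both $R$ and $T$ are $E$-compatible. I will prove only the first identity $(R\circ_E T)^{[1]}[Y]=R^{[1]}[E^{[1]}[T^{[1]}[Y]]]$ in detail; the other three follow by entirely symmetric arguments, swapping the roles of the round-bracket ``$0$'' and ``$1$'' maps and of $\circ_E$ and $\bullet_E$ exactly as in the proof of Lemma~\ref{lemma: E-composition as composition of maps}.

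The main tool is Lemma~\ref{lemma: basic:square:brackets}.5, which turns unions into intersections: for any family $\{Y_i\}$, $S^{[1]}[\bigcup_i Y_i]=\bigcap_i S^{[1]}[Y_i]$, and likewise for $S^{[0]}$. Writing $Y=\bigcup_{y\in Y}\{y\}$, this gives
\[
(R\circ_E T)^{[1]}[Y]=\bigcap_{y\in Y}(R\circ_E T)^{[1]}[y]=\bigcap_{y\in Y} R^{[1]}[E^{[1]}[T^{[1]}[y]]],
\]
the second equality by Lemma~\ref{lemma: E-composition as composition of maps}. So it suffices to show
\[
\bigcap_{y\in Y} R^{[1]}[E^{[1]}[T^{[1]}[y]]] = R^{[1]}[E^{[1]}[T^{[1]}[\textstyle\bigcup_{y\in Y}\{y\}]]].
\]
The obstacle is that the inner operators $E^{[1]}$ and $R^{[1]}$ are antitone, so one cannot simply push the intersection through: $\bigcup_{y}T^{[1]}[y]$ is in general larger than the set $T^{[1]}[Y]$ that appears after collapsing, and after two antitone applications the two sides need not coincide without the compatibility assumptions. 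This is precisely where $E$-compatibility of $R$ and $T$ enters.

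The cleanest route is to iterate Lemma~\ref{lemma: basic:square:brackets}.5 once more and then use the characterization of $E$-compatibility from Lemma~\ref{equivalents of I-compatible}. Concretely: $T^{[1]}[Y]=\bigcap_{y\in Y}T^{[1]}[y]$, so $E^{[1]}[T^{[1]}[Y]]=E^{[1]}[\bigcap_{y}T^{[1]}[y]]$. I would show this equals $E^{[1]}[(\bigcup_{y}E^{[1]}[T^{[1]}[y]])^{[0]}]$ — i.e. replace the intersection of the $T^{[1]}[y]$'s by the ``closure'' of the union of their $E^{[1]}$-images, using that each $T^{[1]}[y]$ is already $E$-stable (here the $E$-compatibility of $T$, via Lemma~\ref{equivalents of I-compatible}, is used, together with Lemma~\ref{lemma: basic:square:brackets}.4). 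Then one more application of the $E$-compatibility of $R$ (Lemma~\ref{equivalents of I-compatible}.1, in the form $R^{[1]}[B]=R^{[1]}[B^{[10]}]$) lets the outer $R^{[1]}$ absorb the extra closure, reducing the right-hand side to $\bigcap_y R^{[1]}[E^{[1]}[T^{[1]}[y]]]$, which is the left-hand side by the first computation. A slightly slicker alternative, which I would try first, is to reuse the already-proven Lemma~\ref{equivalents of I-compatible} directly: since $R\circ_E T$ is $E$-compatible (by the preceding lemma in the appendix), we have $(R\circ_E T)^{[1]}[Y]=(R\circ_E T)^{[1]}[Y^{[01]}]$, and on the right-hand side one can similarly insert closures freely; matching the two after expanding via Lemma~\ref{lemma: E-composition as composition of maps} and Lemma~\ref{lemma: basic:square:brackets}.5 then becomes a routine, if slightly tedious, verification. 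The $\bullet_E$ identities and the ``$[0]$'' identities are handled by the same argument with the round-bracket indices and $E^{[0]}/E^{[1]}$ transposed, exactly mirroring the two-column structure of the proof of Lemma~\ref{lemma: E-composition as composition of maps}.
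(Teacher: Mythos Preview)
Your main approach is essentially the paper's: both decompose $Y$ into singletons via Lemma~\ref{lemma: basic:square:brackets}.5, invoke the singleton case from Lemma~\ref{lemma: E-composition as composition of maps}, use the $E$-compatibility of $T$ to rewrite each $T^{[1]}[x]$ as $E^{[0]}[E^{[1]}[T^{[1]}[x]]]$, pull the resulting intersection out as $E^{[0]}$ of a union (Lemma~\ref{lemma: basic:square:brackets}.5 again), and then use the $E$-compatibility of $R$ via Lemma~\ref{equivalents of I-compatible} to let $R^{[1]}$ absorb the extra $E$-closure before a final application of Lemma~\ref{lemma: basic:square:brackets}.5. Your ``slicker alternative'' is not what the paper does and is left too sketchy here to assess.
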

\begin{proof} We only prove the first identity, the remaining ones  being proved similarly.
	\begin{center}
		\begin{tabular}{r c l l}
			$R^{[1]}[E^{[1]}[T^{[1]}[Y]]]$ &
			= & $R^{[1]}[E^{[1]}[T^{[1]}[\bigcup_{x\in Y}\{x\}]]]$ \\
			& = &  $R^{[1]}[E^{[1]}[\bigcap_{x\in Y}T^{[1]}[x]]]$ & Lemma \ref{lemma: basic:square:brackets}.5  \\
			& = &  $R^{[1]}[E^{[1]}[\bigcap_{x\in Y}E^{[0]}[E^{[1]}[T^{[1]}[x]]]]]$ & $T$  is $E$-compatible  \\
			
			& = &  $R^{[1]}[E^{[1]}[E^{[0]}[\bigcup_{x\in Y}E^{[1]}[T^{[1]}[x]]]]]$ & Lemma \ref{lemma: basic:square:brackets}.5\\
			& = &  $R^{[1]}[\bigcup_{x\in Y}E^{[1]}[T^{[1]}[x]]] $ & Lemma \ref{equivalents of I-compatible} \\
			& = & $\bigcap_{x\in Y} R^{[1]}[E^{[1]}[T^{[1]}[x]]]$ & Lemma \ref{lemma: basic:square:brackets}.5\\
			& = & $\bigcap_{x\in Y} (R\circ_ET)^{[1]}[x]$ & Lemma  \ref{lemma: E-composition as composition of maps} \\
			& = & $ (R\circ_E T)^{[1]}[\bigcup_{x\in Y}\{x\}]$ & Lemma \ref{lemma: basic:square:brackets}.5 \\
			& = & $ (R\circ_ET)^{[1]}[Y].$
		\end{tabular}
	\end{center}
\end{proof}

\begin{lemma}
	 If $R, T, U\subseteq Z\times Z$ are  $E$-compatible, then   $(R\circ_E T)\circ_E U = R\circ_E (T\circ_E U)$ and $(R\bullet_E T)\bullet_E U = R\bullet_E (T\bullet_E U)$.
\end{lemma}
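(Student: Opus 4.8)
The associativity of $\circ_E$ (the case of $\bullet_E$ is entirely analogous) will be proven by showing that both $(R\circ_E T)\circ_E U$ and $R\circ_E(T\circ_E U)$ have the same $(\cdot)^{[0]}[a]$-sections for every $a\in Z$; since all relations involved are $E$-compatible and hence so are the compound relations (by the second lemma of the appendix), two $E$-compatible relations with the same $(\cdot)^{[0]}$-sections coincide. Indeed, if $S,S'$ are $E$-compatible and $S^{[0]}[a]=S'^{[0]}[a]$ for all $a$, then $a\mathbin{S^c}x$ iff $a\in S^{[0]}[x]$... — more simply, $S^{[0]}[a]=S'^{[0]}[a]$ for all $a$ forces $S=S'$ directly from the definition $S^{[0]}[a]=\{u\mid u\mathbin{S^c}a\}$, so $E$-compatibility is not even needed for this last reduction; it is needed only to apply Lemma~\ref{lemma:comp4}.

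\textbf{Key steps.} First I would fix $a\in Z$ and compute $((R\circ_E T)\circ_E U)^{[0]}[a]$ using Lemma~\ref{lemma: E-composition as composition of maps} (the identity $(R'\circ_E U)^{[0]}[a]=R'^{[0]}[E^{[0]}[U^{[0]}[a]]]$ applied with $R':=R\circ_E T$), obtaining
\[
((R\circ_E T)\circ_E U)^{[0]}[a]=(R\circ_E T)^{[0]}\big[E^{[0]}[U^{[0]}[a]]\big].
\]
Since $R$ and $T$ are $E$-compatible, Lemma~\ref{lemma:comp4} gives $(R\circ_E T)^{[0]}[B]=R^{[0]}[E^{[0]}[T^{[0]}[B]]]$ for every $B\subseteq Z$; applying this with $B:=E^{[0]}[U^{[0]}[a]]$ yields
\[
((R\circ_E T)\circ_E U)^{[0]}[a]=R^{[0]}\big[E^{[0]}\big[T^{[0]}\big[E^{[0]}[U^{[0]}[a]]\big]\big]\big].
\]
Next I would compute $(R\circ_E(T\circ_E U))^{[0]}[a]$: by Lemma~\ref{lemma: E-composition as composition of maps} this equals $R^{[0]}[E^{[0]}[(T\circ_E U)^{[0]}[a]]]$, and since $T,U$ are $E$-compatible, Lemma~\ref{lemma:comp4} gives $(T\circ_E U)^{[0]}[a]=T^{[0]}[E^{[0]}[U^{[0]}[a]]]$, so
\[
(R\circ_E(T\circ_E U))^{[0]}[a]=R^{[0]}\big[E^{[0]}\big[T^{[0]}\big[E^{[0]}[U^{[0]}[a]]\big]\big]\big].
\]
The two right-hand sides are literally identical, so $((R\circ_E T)\circ_E U)^{[0]}[a]=(R\circ_E(T\circ_E U))^{[0]}[a]$ for all $a$, whence the two relations are equal. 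For $\bullet_E$ one repeats the argument using the $\bullet_E$-clauses of Lemmas~\ref{lemma: E-composition as composition of maps} and~\ref{lemma:comp4}, which introduce $E^{[1]}$ in place of some occurrences of $E^{[0]}$; one must check the bracket pattern works out (it does: $(R\bullet_E(T\bullet_E U))^{[0]}[x]=R^{[0]}[E^{[1]}[T^{[0]}[E^{[1]}[U^{[0]}[x]]]]]$ and likewise from the other side).

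\textbf{Main obstacle.} There is essentially no hard step here — the proof is a bookkeeping exercise in matching bracket expressions. The one point requiring care is making sure that Lemma~\ref{lemma:comp4} is invoked with the correct $E$-compatibility hypotheses for each application (e.g.\ applying it to $R\circ_E T$ requires $R$ and $T$ $E$-compatible, which is part of the hypothesis), and, in the $\bullet_E$ case, keeping track of whether each intermediate application of Lemma~\ref{lemma:comp4} produces $E^{[0]}$ or $E^{[1]}$ so that both associations genuinely collapse to the same string of maps. It is also worth a remark that one could instead phrase the whole computation in terms of the composite maps on $\mathbb{X}^+$ and cite \cite[Proposition 7.31]{davey2002introduction}-style uniqueness of adjoints, but the direct section-chasing above is shorter.
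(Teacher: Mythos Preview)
Your proposal is correct and follows essentially the same approach as the paper's proof: both arguments fix a point, expand the sections of the two composites by iterating the identities from Lemmas~\ref{lemma: E-composition as composition of maps} and~\ref{lemma:comp4}, and observe that the resulting five-fold composites of maps are literally the same string. The only cosmetic difference is that the paper works with the $(\cdot)^{[1]}[x]$-sections while you work with the $(\cdot)^{[0]}[a]$-sections, which is immaterial.
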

\begin{proof}
For every $x\in Z$, repeatedly applying Lemma \ref{lemma:comp4} we get: 
	\begin{center}
		\begin{tabular}{r c l}
			$(R\circ_E (T\circ_E U))^{[1]}[x]$ &$=$& $R^{[1]}[E^{[1]}[(T\circ_E U)^{[1]} [x]]]$ \\
			&$=$& $R^{[1]}[E^{[1]}[T^{[1]}[E^{[1]}[ U^{[1]} [x]]]]]$ \\
			&$=$& $(R\circ_E T)^{[1]}[E^{[1]}[ U^{[1]} [x]]]$ \\
			&$=$& $((R\circ_E T)\circ_E U)^{[1]} [x],$ \\
		\end{tabular}
	\end{center}
	which shows that  $x (R\circ_E (T\circ_E U))^c a$ iff $x ((R\circ_E T)\circ_E U)^ca$ for any $x, a\in Z$, and hence $x (R\circ_E (T\circ_E U)) a$ iff $x ((R\circ_E T)\circ_E U)a$ for any $x, a\in Z$, as required. The remaining statements are proven similarly.
\end{proof}

\section{Proof of Proposition \ref{lemma:correspondences}}
\label{sec:appendixproof of sahlqvist corrs}

\noindent 2.
		\begin{center}
			\begin{tabular}{r l l l}
				&$\forall p$  [$ p \leq \Diamond p $]
				\\
				iff& $\forall p \forall j \forall m  [(j\le p \ \&\  \Diamond p\le m )\Rightarrow j\le m]$
				& first approximation\\
                iff& $\forall p \forall j \forall m  [(j\le p \ \&\   p\le\blacksquare m )\Rightarrow j\le m]$
				& adjunction\\
				iff & $ \forall j \forall m  [ j\le\blacksquare m  \Rightarrow j\le m]$
				&Ackermann's Lemma \\
				iff& $ \forall m  [\blacksquare m\le  m]$
				&$J$ completely join-generates $\mathbb{F}^{+}$\\
				i.e &$\forall x\in Z$~~~~ $R_{\blacksquare}^{[0]}[x^{[0 1]}]\subseteq E^{[0]}[x]$& translation\\
                iff&$\forall x\in Z$~~~~ $R_{\blacksquare}^{[0]}[x]\subseteq E^{[0]}[x]$& Lemma \ref{equivalents of I-compatible} since $R_{\blacksquare}$ is $E$-compatible\\
                iff& $R^c_{\blacksquare}\subseteq E^c$& \eqref{eq:def:square brackets} \\
                iff& $E\subseteq R_{\blacksquare}$.& 

			\end{tabular}
		\end{center}
	
\noindent 3.
		\begin{center}
			\begin{tabular}{r l l l}
				&$\forall p$  [$\Box p \leq \Box \Box p $]\\ iff& $\forall p \forall j \forall m  [(j\le \Box p \ \&\   p\le m )\Rightarrow j\le \Box \Box m]$
				&first approximation\\
				iff & $\forall j \forall m  [ j\le  \Box m\Rightarrow j\le \Box\Box m]$
				&  Ackermann's Lemma \\
				iff& $ \forall m  [\Box m\le \Box \Box m]$
				&$J$ completely join-generates $\mathbb{F}^{+}$\\
				i.e &$\forall x\in Z$~~~~ $R_{\Box}^{[0]}[x^{[01]}] \subseteq R_{\Box}^{[0]}[E^{[1]}[R_{\square}^{[0]}[x^{[01]}]]]$ & translation\\
                iff&$\forall x\in Z$~~~~ $R_{\Box}^{[0]}[x] \subseteq R_{\Box}^{[0]}[E^{[1]}[R_{\square}^{[0]}[x]]]$ & Lemma \ref{equivalents of I-compatible} since $R_{\square}$ is $E$-compatible\\
                 iff&$\forall x\in Z$~~~~ $R_{\Box}^{[0]}[x] \subseteq (R_{\Box}\bullet_E R_\Box)^{[0]}[x]$ &Lemma \ref{lemma:comp4} \\
                iff& $R_\Box^c\subseteq (R_{\Box}\bullet_E R_\Box)^c  $& \eqref{eq:def:square brackets} \\
                  iff& $R_{\Box}\bullet_E R_\Box\subseteq R_\Box  $.&  \\
			\end{tabular}
		\end{center}
		
\noindent 4.
		\begin{center}
			\begin{tabular}{r l l l}
				&$\forall p$  [$\Diamond\Diamond p \leq  \Diamond p $]\\
				iff& $\forall p \forall j \forall m  [(j\le  p \ \&\  \Diamond p\le m )\Rightarrow \Diamond\Diamond j\le  m]$
				&first approximation\\
				iff & $\forall j \forall m  [ \Diamond j\le   m\Rightarrow \Diamond\Diamond j\le  m]$
				& Ackermann's Lemma \\
				iff& $ \forall j  [\Diamond\Diamond j\le  \Diamond j]$
				&$M$ completely meet-generates $\mathbb{F}^{+}$\\
				i.e &$\forall a\in Z$~~~~ $R_{\Diamond}^{[0]}[a^{[10]}] \subseteq R_{\Diamond}^{[0]}[(R_{\Diamond}^{[0]}[a^{[10]}])^{[0]}]$& translation\\
                iff &$\forall a\in Z$~~~~ $R_{\Diamond}^{[0]}[a] \subseteq R_{\Diamond}^{[0]}[(R_{\Diamond}^{[0]}[a])^{[0]}]$ & Lemma \ref{equivalents of I-compatible} since $R_{\Diamond}$ is $E$-compatible\\
                iff &$\forall a\in Z$~~~~ $R_{\Diamond}^{[0]}[a] \subseteq (R_{\Diamond}\circ R_{\Diamond})^{[0]}[a]$ & Lemma \ref{lemma:comp4}\\
                iff& $R_{\Diamond}^c \subseteq (R_{\Diamond}\circ_E R_{\Diamond})^c $& \eqref{eq:def:square brackets}\\
                iff& $R_{\Diamond}\circ_E R_{\Diamond} \subseteq R_{\Diamond} $.& 

			\end{tabular}
		\end{center}

\noindent 5.
		\begin{center}
			\begin{tabular}{r l l l}
				&$\forall p  [p\leq \Box p ]$\\
				iff& $\forall p \forall j \forall m  [(j\le  p \ \&\  p\le m )\Rightarrow j\le \Box m]$
				& first approximation\\
				iff & $ \forall j \forall m  [ j\le  m \Rightarrow j\le \Box m]$
				&Ackermann's Lemma \\
				iff& $ \forall m  [ m\le \Box m]$
				&$J$ completely join-generates $\mathbb{F}^{+}$\\
				i.e &$\forall x\in Z$~~~~ $ E^{[0]}[x]\subseteq R_{\square}^{[0]}[x^{[01]}]$& translation\\
                iff &$\forall x\in Z$~~~~ $ E^{[0]}[x]\subseteq R_{\square}^{[0]}[x]$& Lemma \ref{equivalents of I-compatible} since $R_{\square}$ is $E$-compatible\\
                iff& $E^c \subseteq R_{\square}^c$& \eqref{eq:def:square brackets}\\
                 iff& $R_{\square}\subseteq E$.& 

			\end{tabular}
		\end{center}
		
\noindent 6.
		\begin{center}
			\begin{tabular}{r l l l}
				&$\forall p$  [$\Diamond  p \leq  p $]
				\\
				iff& $\forall p \forall j \forall m  [(j\le p \ \&\   p\le m )\Rightarrow \Diamond j\le m]$
				& first approximation\\
				iff & $ \forall j \forall m  [ j\le m  \Rightarrow \Diamond j\le m]$
				&Ackermann's Lemma \\
                iff & $ \forall j \forall m  [ j\le m  \Rightarrow j\le \blacksquare m]$
				& adjunction \\
				iff& $ \forall m  [ m\le\blacksquare  m]$
				&$J$ completely join-generates $\mathbb{F}^{+}$\\
				i.e &$\forall x\in Z$~~~~ $ E^{[0]}[x]\subseteq R_{\blacksquare}^{[0]}[x^{[01]}]$& translation\\
                iff&$\forall x\in Z$~~~~ $ E^{[0]}[x]\subseteq R_{\blacksquare}^{[0]}[x]$& Lemma \ref{equivalents of I-compatible} since $R_{\blacksquare}$ is $E$-compatible\\
                 iff& $E^c \subseteq R_{\blacksquare}^c$& \eqref{eq:def:square brackets}\\
                 iff& $R_{\blacksquare}\subseteq E$.& 			
                 \end{tabular}
		\end{center}

\end{document}